\documentclass{conm-p-l}

\usepackage{amssymb}

\usepackage{graphicx}

\newtheorem{prop}{Proposition}
\newtheorem{lem}{Lemma}
\newtheorem{theorem}{Theorem}

\theoremstyle{definition}
\newtheorem{za}{Remark}
\newtheorem{opr}{Definition}

\newcommand{\Cb}{{\bf C}}
\newcommand{\Z}{\mathbb{Z}}
\newcommand{\R}{\mathbb{R}}
\newcommand{\I}{\rm{I}}
\newcommand{\II}{\rm{II}}
\newcommand{\Sk}{\operatorname{Sk}}
\newcommand{\Skdir}{\Sk_{\fam0 dir}}
\newcommand{\Skud}{\Sk_{\fam0 ud}}
\newcommand{\base}{D}
\newcommand{\bbase}{\bar\base}
\newcommand{\Gf}{\varphi}
\newcommand{\XR}{X_{\R}}

\newcommand{\inserthyphen}{\ifcat\next a-\fi\ignorespaces}
\newcommand{\pblack}{$\bullet$\futurelet\next\inserthyphen}
\newcommand{\pwhite}{$\circ$\futurelet\next\inserthyphen}
\newcommand{\pcross}{$\vcenter{\hbox{$\scriptstyle\times$}}$\futurelet\next\inserthyphen}
\newcommand{\black}{\protect\pblack}
\newcommand{\white}{\protect\pwhite}
\newcommand{\cross}{\protect\pcross}

\begin{document}

\title[Maximally inflected real trigonal curves]{Maximally inflected real trigonal curves on Hirzebruch surfaces}

\author{V.I. Zvonilov.}
\address{Institute of Information Technology, Mathematics and Mechanics, Lobachevsky State University of Nizhny Novgorod, Nizhny Novgorod, Russia, 603950 }

\email{zvonilov@gmail.com}
\thanks{The author's work was done on a subject of the State assignment (N 0729-2020-0055).}
\subjclass[2010]{Primary 14P25; Secondary 14H45, 14J27, 05C90}

\keywords{Trigonal curve, real elliptic surface, \emph{dessins d'enfants}, type of real algebraic curve}
\begin{abstract}
		In 2014 A. Degtyarev, I. Itenberg and the author gave a description, up to fiberwise equivariant deformations, of maximally inflected real
	trigonal curves of
	type~I (over a base $ B $ of an arbitrary genus) in terms of the combinatorics of sufficiently simple
	graphs   
	and for $ B=\mathbb{P}^1 $ obtained a complete classification of such curves.	
	In this paper, the mentioned results  are extended to maximally inflected real
	trigonal curves of
	type~II over $ B=\mathbb{P}^1 $. 	
		
\end{abstract}

\maketitle

\section{Introduction}
Let $\pi: \Sigma \rightarrow B$ be a geometrically ruled surface over a
base~$B$
and with the real exceptional section~$E$, $E^2=~-d<0$.
A \emph{real trigonal
	curve}  is a reduced real curve $C\subset\Sigma$ disjoint from $E$ and such that the restriction $\pi_C:C\rightarrow B$ of~$\pi$ has degree three\footnote{Thus, throughout the paper a trigonal curve is not abstract but embedded into a ruled surface.}.
Such a curve $ C $ is \emph{maximally inflected} if it is nonsingular and
all critical points of $ \pi_C $ are simple and real.
The notion of maximally inflected real trigonal curve
can be considered as some kind of a generalization of a real trigonal curve that is maximal in the sense of the
Smith--Thom
inequality (for details see Introduction in~\cite{DIZ}).

 A nonsingular real algebraic curve~$C$  is of \emph{type~$\I $} if its real part (i.e. the set of its real points) is nonempty and the fundamental class of the real part vanishes in $H_1(C; \Z_2)  $; otherwise, $C$ is of \emph{type~$\II$}.
For maximally inflected curves of type I 
over a base of any genus, a description up to equivariant deformations was done in~\cite{DIZ}. The present paper develops the techniques used in~\cite{DIZ} and extends its results to maximally inflected
trigonal curves of any
type over the rational base (\emph{i.e.} over $ B=\mathbb{P}^1 $). 

Any Jacobian
elliptic surface is obtained from a ruled surface $ \Sigma $ and a trigonal curve $C\subset\Sigma$ by resolving singularities of the double covering of $\Sigma$ ramified at $ C\cup E $ where  $E\subset\Sigma$ is the exceptional section.    Thus, as in~\cite{DIK} and \cite{DIZ}, the results for maximally inflected curves can be extended, almost literally, to
real Jacobian
elliptic surfaces  with all singular fibers real.

Throughout the paper, all varieties are over~$\Cb $
(usually, with a real structure)
and nonsingular.

\subsection{Principal results}
As in~\cite{DIZ}, 
the principal tool used in the paper
is the notion of \emph{dessin}, see
Section~\ref{rtc},
which is a real
version of Grothendieck's \emph{dessin d'enfants} of the
functional
$j$-invariant of the curve;
this concept was originally suggested by
S.~Orevkov~\cite{O} and then developed in~\cite{DIK} and in the  book \cite{Degt}, where
the study of deformation classes of real trigonal curves was
reduced to that of dessins, see Proposition~\ref{equiv.curves}.
In the
case of maximally inflected curves over the rational base, we manage to
simplify
the 
rather complicated combinatorial 
structure 
of dessins 
to somewhat
smaller
graphs, which are called \emph{skeletons}, see
Section~\ref{S.skeletons}. 
The principal result of the paper is
Theorem~\ref{cor.Sk} which
establishes a one-to-one correspondence between the equivariant
fiberwise deformation classes of maximally inflected trigonal
curves over the rational base and certain equivalence classes of skeletons.

In the paper, the study is restricted to the case of rational base because it is not difficult to construct an example of two nonequivalent, up to fiberwise equivariant deformations, maximally inflected
	trigonal curves of type~II over a base of positive genus with the same skeleton. Thus, below all trigonal curves are over the rational base.
	
 Theorem~\ref{cor.Sk} is used to
 a constructive description
of the real parts realized by maximally inflected 
trigonal curves, see 
Section~\ref{S.rational}.

As already noted in~\cite{DIZ},  
along with the deformation equivalence, there is a weaker relation, the so called
\emph{rigid isotopy}, which does not take the ruling of the surface containing trigonal curves into account and is defined as a path in the space of nonsingular trigonal curves on the surface. It turns out that, up to rigid isotopy, the assumption that a curve over the rational base should be maximally
inflected is not very restrictive (see Theorem \ref{max.inflected}).  In the last part of the paper it is proved that the weak equivalence differs from the deformation one even for curves of type~I.

\subsection{Contents of the paper}
Sections~\ref{S.curves} and~\ref{rtc}, following \cite{DIZ} and \cite{DIK},  recall a few notions and facts
related to topology of real trigonal curves and their dessins,
respectively.
In Section~\ref{S.skeletons}, we introduce
skeletons, define their equivalence, and prove
Theorem~\ref{cor.Sk}. In Section~\ref{S.rational} we introduce \emph{blocks}, which are the
`elementary pieces' constituting the dessin of
any maximally inflected curve.
Finally, in Section~\ref{s.rigid} we compare the weak equivalence with the deformation one.

\section{Trigonal curves and elliptic surfaces}\label{S.curves}

This section, following \cite{DIZ},  recalls a few basic notions and facts
related to topology of real trigonal curves
and elliptic surfaces.

A \emph{real variety}
is a complex algebraic variety~$V$  with
an anti-holomorphic involution $c = c_V: V \rightarrow V$. 
The fixed point set $V_{\R} = \mathrm{Fix}\, c$ is called the \emph{real part}
of~$V$. A regular morphism $f:V \rightarrow W$ of two real varieties
is called \emph{real}, or \emph{equivariant}
if $f\circ c_V=c_W\circ f$.

\subsection{Real trigonal curves}\label{real-trigonal}
Let $\pi:\Sigma_d\rightarrow \mathbb{P}^1$ be a real Hirzebruch surface (rational geometrically ruled surface) with the real exceptional section~$E$, $E^2=~-d<0$, and $C\subset\Sigma_d$ be a real trigonal curve. 
The fibers of the ruling~$\pi$ are  called
\emph{vertical}, \emph{e.g.}, we speak about vertical tangents, vertical
flexes \emph{etc}. 
In an affine chart  on $\Sigma_d$, the exceptional section $ E $ and the curve   $ C$ are defined by equations $ y=\infty $ and
$ y^3+b(x)y+w(x)=0, $
where $b$ and $w$ are certain sections that can be considered as homogeneous polynomials of degrees $ 2d $ and $ 3d $. 
 
A nonsingular real trigonal curve   $ C$ is \emph{almost generic} 
 if $ C $ is nonsingular and all zeros of the discriminant
$\Delta(x)=4b^3+27w^2$ are simple.  It is 
\emph{hyperbolic} 
if the restriction $C_{\R} \rightarrow \mathbb{P}^1_{\R}$ of~$\pi$ is three-to-one.

For a real trigonal curve $ C:  y^3+b(x)y+w(x)=0$, the rational function $j=j_C=\frac{4b^3}{\Delta}=1-\frac{27w^2}{\Delta}$ is the \emph{$j$-invariant} of $ C $.
An almost generic curve is \emph{generic} if, for each real critical value $ t $ of its  $j$-invariant, 
the
ramification index of each pull-back of~$t=0$ (respectively,~$t\neq 0$)
equals~$3$ (respectively,~$2$) and all pull-backs of $ t\notin \{0,1\}$ are real. Any almost generic
real
trigonal curve can be perturbed to a generic one.

The real part of a non-hyperbolic curve~$C$ has a unique
\emph{long component} $l$, characterized by the fact
that the restriction $l \rightarrow \mathbb{P}^1_{\R}$ of~$\pi$
is of degree~$\pm 1$. All other components of~$C_{\R}$
are called \emph{ovals}; they are mapped to~$\mathbb{P}^1_{\R}$
with degree~$0$. Let $Z \subset \mathbb{P}^1_{\R}$ be the set of points
with more than one preimage in $C_{\R}$. Then, each oval
projects to a whole component of~$Z$, which is also called
an \emph{oval}. The other components of $Z$,
as well as
their preimages
in~$l$, are called \emph{zigzags}.

\subsection{Deformations}\label{deformation}
Throughout this paper, by a \emph{deformation} of a trigonal curve
$C\subset\Sigma_d$  we mean a deformation of
the pair $(\pi:\Sigma_d\rightarrow \mathbb{P}^1, C)$
in the sense of Kodaira--Spencer.
A deformation of an almost generic curve
is called \emph{fiberwise} if the curve remains
almost generic throughout the deformation.

\emph{Deformation equivalence} of real trigonal curves is
the equivalence relation generated by equivariant
fiberwise deformations and
real isomorphisms.

\subsection{Jacobian surfaces}\label{s.If}
A real surface~$X$ is said to be of \emph{type~$\I$} if
$[\XR]=w_2(X)$ in $H_2(X;\Z_2)$.

Let~$C \subset \Sigma_d$ be a nonsingular trigonal curve.
Assume that $d=2k$
is even and
consider a double covering $p:X\rightarrow\Sigma_d$ of~$\Sigma_d$ ramified
at $C+E$. It is a Jacobian elliptic surface.

The following statement is a consequence of \cite[2.4.2, 2.5.2]{DIZ}.
\begin{prop} \label{IF=IB}
	Let $C\subset\Sigma_{2k}$ be a real trigonal curve. Then, a
	Jacobian elliptic surface~$X$ ramified at $C+E$ is of
	type~$\I$ if and only if $C$ is of type~$\I$.
\end{prop}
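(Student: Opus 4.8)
The plan is to translate both type~$\I$ conditions into statements about $\Z_2$-(co)homology and then compare them using the double-covering structure together with the fact that the base $\Sigma_{2k}$ is rational. Recall that $C$ is of type~$\I$ exactly when $[C_\R]=0$ in $H_1(C;\Z_2)$, while $X$ is of type~$\I$ exactly when $[X_\R]=w_2(X)$ in $H_2(X;\Z_2)$. Since $C$ is nonsingular and disjoint from $E$, the branch divisor $C+E$ is a smooth (even) divisor, so the double cover $p:X\to\Sigma_{2k}$ is itself nonsingular and carries an induced real structure for which $p$ is equivariant; this lets me work with $X$ directly, without having to track a resolution.

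First I would describe $X_\R$ geometrically. Writing the cover locally as $u^2=s$, where $\{s=0\}=C+E$, the fibre of $p$ over a real point of $\Sigma_{2k}$ consists of two real points precisely over the region $\{s\ge 0\}$ and of a conjugate pair over $\{s<0\}$. Hence $X_\R$ is the ``double'' of the closed region $\overline{\Sigma^+_\R}:=\{s\ge 0\}\subset(\Sigma_{2k})_\R$: two copies glued along the real ramification locus $R_\R=C_\R\cup E_\R$. In particular $X_\R$ is governed entirely by $C_\R$ and the section $E$, and this is where the type of $C$ must enter.

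Next I would compute the two sides of the surface condition. For the Chern class I would use the double-cover formula $K_X=p^*K_{\Sigma_{2k}}+R$, with $2R=p^*(C+E)$, to get $w_2(X)\equiv p^*w_2(\Sigma_{2k})+[R]\pmod 2$. For the real part I would express $[X_\R]\in H_2(X;\Z_2)$ through the two sheets $\overline{\Sigma^+_\R}$ and their common boundary $R_\R$. The key point is then to compare $[X_\R]$ with $w_2(X)$ by pushing both relations down to, and pulling the ramification data back from, the branch curve: because $\Sigma_{2k}$ is rational, $H_1(\Sigma_{2k};\Z_2)=0$ and $H_2$ is spanned by the fibre and section classes, so every term in the comparison is determined and the discrepancy $[X_\R]+w_2(X)$ collapses to a single contribution supported near $R$.

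Finally I would identify that remaining contribution with the class $[C_\R]\in H_1(C;\Z_2)$; the section $E\cong\mathbb{P}^1$ contributes nothing, since $H_1(E;\Z_2)=0$ and $E_\R\cong\mathbb{P}^1_\R$ is a fixed circle entering both sides identically, so that $[X_\R]=w_2(X)$ holds if and only if $[C_\R]=0$, i.e.\ if and only if $C$ is of type~$\I$. I expect the main obstacle to be exactly this last linking step, relating a $2$-dimensional class on the cover $X$ to a $1$-dimensional class on the branch curve $C$: making it precise requires a Gysin/transfer argument along the ramification locus, carried out with $\Z_2$-coefficients and compatibly with the conjugation $c_X$. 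One clean way to organise it is through the Smith exact sequence of $c_X$, which should convert the equality $[X_\R]=w_2(X)$ into a condition on how $C_\R$ sits inside $C$, precisely the dividing condition defining type~$\I$ for the curve.
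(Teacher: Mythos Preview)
The paper does not actually prove this statement: it simply records it as ``a consequence of \cite[2.4.2, 2.5.2]{DIZ}''. So there is no argument in the paper to compare your proposal against; what one can assess is whether your outline could, in principle, substitute for that citation.

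Your strategy is reasonable in spirit but is, by your own admission, only a plan. Two points deserve attention before it could count as a proof. First, the description of $X_{\R}$ as the double of a region $\{s\ge 0\}$ glosses over the choice of real structure on the cover: a real double cover ramified along a real divisor typically carries two liftings of the conjugation, differing by the deck involution, and the shape of $X_{\R}$ (hence the class $[X_{\R}]$) depends on which one is taken. Moreover, $s$ is a section of a line bundle rather than a global function, so the ``sign'' region is only well defined once the real structure on that bundle is fixed; this needs to be made explicit. Second, and more importantly, the heart of the matter is precisely the step you flag as unfinished: identifying the class $[X_{\R}]+w_2(X)\in H_2(X;\Z_2)$ with (the image of) $[C_{\R}]\in H_1(C;\Z_2)$. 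Saying that ``a Gysin/transfer argument'' or ``the Smith exact sequence'' should do it is not yet a proof; one must actually set up the transfer map, check its compatibility with the real structures, and verify that the section $E$ really drops out. Until that computation is written down, the proposal remains an outline rather than a proof, and the honest thing to do---as the paper does---is to cite \cite{DIZ}, where this identification is carried out.
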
 

\section{Dessins}\label{rtc}
Instead of usual approach to the notion of dessin as a special case  of a trichotomic graph (see~\cite[5]{DIK} and ~\cite[3]{DIZ}), in this paper, a \emph{dessin}   is defined  as a graph $ \Gamma$ in a disk isomorphic to the dessin of a generic real trigonal curve (see below). Such a definition is relevant due to Proposition \ref{equiv.curves} below.

Let $ D $ be the quotient of $ \mathbb{P}^1 $ by the complex conjugation and $\mathrm{pr}: \mathbb{P}^1 \rightarrow D$ be the projection.  For points, segments, \emph{etc.}
situated at the boundary~$\partial\base$, the term \emph{real} is used.
For the $ j $-invariant $j_C: \mathbb{P}^1\rightarrow \mathbb{P}^1= \Cb\cup\{\infty\}$ of a generic real trigonal curve $ C\subset\Sigma_d $,  decorate $ \mathbb{P}^1_{\R}$ lying in  the target Riemann sphere and  endowed with the positive
orientation (\emph{i.e.}, the order of~$\R$), as follows: $0$, $1$,
and~$\infty$ are, respectively, \black-, \white-, and
\cross-vertices; $(\infty,0)$, $(0,1)$, and $(1,\infty)$ are, respectively, \rm{solid}, \rm{bold}, and \rm{dotted} edges.   
The accordingly oriented and decorated  graph $\Gamma_C=\mathrm{pr}(j_C^{-1}(\mathbb{P}^1_{\R}))$ is a \emph{dessin}  of $ C $. Its \black-, \white-, and
\cross-vertices, which are the branch points with   critical values $ 0,1, \infty $, are called \emph{essential}, the other ones,  which are the
branch points with  real critical values other than $ 0, 1, \infty $, are called \emph{monochrome} vertices.  Since $ C $ is generic, $ \Gamma_C $ has no \emph{inner} (\emph{i.e.}, not real) monochrome vertices and the valency of any monochrome vertex is $ 3 $.

The monochrome vertices can
further be subdivided into solid, bold, and dotted,
according to their
incident edges.
A \emph{monochrome cycle} in~$\Gamma_C$ is a cycle with all vertices
monochrome, hence all edges and vertices of the same kind.
The definition of dessin implies
that~$\Gamma_C$ has no oriented monochrome cycles. 

The \emph{degree} of $ \Gamma_C $ is~$ 3d$.  A dessin of degree~$3$  is
called a \emph{cubic}.

In the drawings, (portions of) the \emph{real part} $ \partial D \cap\Gamma $ of a dessin $ \Gamma $
are indicated by wide grey lines.

For a dessin
$\Gamma\subset\base$, we denote by~$D_{\Gamma}$ the closed cut
of~$D$ along~$\Gamma$.
The connected components of~$D_{\Gamma}$
are called \emph{regions} of~$\Gamma$. A region with three essential vertices in
the boundary is called \emph{triangular}.

\subsection{Pillars.}
\label{pillar}
A dessin $\Gamma$
 is
called
 \emph{hyperbolic}, if all its real
	edges  are dotted. It corresponds to a hyperbolic curve.

For some real edges  of the same kind of the decoration of a dessin $ \Gamma $, if a union of the closures of them
  is homeomorphic to
a closed interval,
this union is called a \emph{segment}.
A \rm{dotted} (\rm{bold})
segment
is called \emph{maximal}
if it is bounded by
two
\cross-
(respectively, \black-) vertices.
A \emph{dotted}/\emph{bold} \emph{pillar} ($ n $-\emph{pillar}) is
 a maximal \rm{dotted}/\rm{bold} segment (with the number of \white-vertices being equal $ n $).
 
 If $\Gamma$ 
 is non-hyperbolic,
 the ovals and zigzags in $ \mathbb{P}^1_{\R} $ are represented
 by the  \rm{dotted} pillars
 of~$\Gamma$, which contain even and odd number of \white-vertices, respectively.
  
 A bold pillar
 with an even/odd number of
 \white-vertices  
 is called a
 \emph{wave/jump}. 

\subsection{ }\label{ss.equivalence}
Two dessins are called
\emph{equivalent} if, after a homeomorphism
of the disk, they
are
connected by a finite sequence of
isotopies and the following \emph{elementary moves}:
\begin{itemize}
	\item[--]
	\emph{monochrome modification}, see
	Figure~\ref{fig.moves}(a);
	\item[--]
	\emph{creating} (\emph{destroying}) a bridge\emph{}, see
	Figure~\ref{fig.moves}(b),
	where a \emph{bridge} is a pair of
	monochrome vertices connected by a real monochrome edge;
	\item[--]
	\emph{\white-in} and its inverse \emph{\white-out}, see
	Figure~\ref{fig.moves}(c) and~(d);
	\item[--]
	\emph{\black-in} and its inverse \emph{\black-out}, see
	Figure~\ref{fig.moves}(e) and~(f).
\end{itemize}
(In the first two cases, a move is
considered valid
only if the result
is again a dessin.
In other words,
one needs to check
the absence of oriented monochrome cycles.)
An equivalence of two dessins 
is called \emph{restricted} if the homeomorphism
is identical and the isotopies above can be chosen
to preserve the pillars (as sets).

\begin{figure}[tb]
	\begin{center}
		\includegraphics{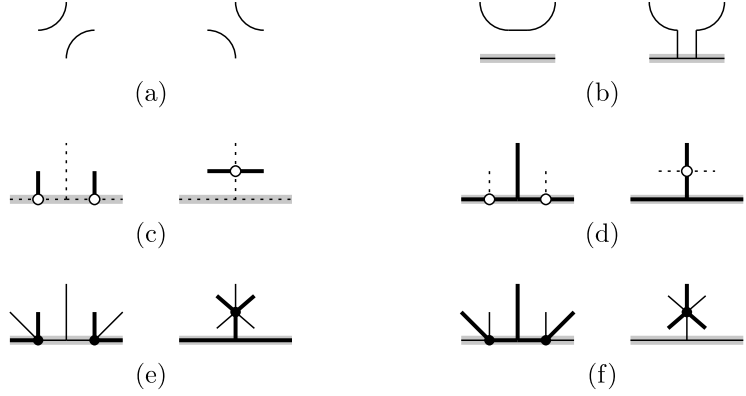}\\
	\end{center}
	\caption{Elementary moves of dessins}\label{fig.moves}
\end{figure}

The following statement is proved in~\cite[5.3]{DIK}.

\begin{prop} \label{equiv.curves}
	Each dessin~$\Gamma$ is of the form~$\Gamma_C$
	for some generic real trigonal curve~$C$.
	Two generic real trigonal curves are deformation
	equivalent
	\emph{(}in the
	class of
	almost generic
	real trigonal curves\emph{)}
	if and only if their dessins are equivalent.
	\qed
\end{prop}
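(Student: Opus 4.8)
The statement packages two assertions: \emph{realizability} (every abstract dessin $\Gamma$ arises as $\Gamma_C$ for some generic real trigonal curve $C$) and the \emph{deformation dictionary} (deformation equivalence of curves is the same as equivalence of dessins). My plan is to route everything through the $j$-invariant, exploiting that a generic real trigonal curve is encoded, up to deformation, by its real rational $j$-invariant $j_C\colon\mathbb{P}^1\to\mathbb{P}^1$ together with the combinatorial pattern of $j_C^{-1}(\mathbb{P}^1_{\R})$, which is exactly $\Gamma_C$. The two assertions are then proved by a Riemann-existence argument and by a stratification/wall analysis of the space of almost generic curves, respectively.

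For realizability, given an abstract dessin $\Gamma\subset\base$ I would first lift it through $\mathrm{pr}$ to a conjugation-invariant decorated graph $\tilde\Gamma\subset\mathbb{P}^1$. The decoration (\black-, \white-, \cross-vertices together with solid, bold, and dotted edges), the prescribed orientation, and the absence of oriented monochrome cycles turn the complement $\mathbb{P}^1\setminus\tilde\Gamma$ into a union of triangular regions, each of which is to map homeomorphically onto the open upper or lower half-plane. This is precisely the combinatorial data of a branched covering of $\mathbb{P}^1$ with prescribed ramification over $0$, $1$, and $\infty$, so by the Riemann existence theorem it is realized by a holomorphic map $j\colon\mathbb{P}^1\to\mathbb{P}^1$; equivariance of $\tilde\Gamma$ makes $j$ real. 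Recovering the Weierstrass data from $j$ via $4b^3=j\Delta$ and $27w^2=(1-j)\Delta$ then produces a real trigonal curve $C$ with $j_C=j$, and the genericity requirements on $C$ (simple zeros of $\Delta$, ramification index $3$ over $0$ and $2$ over $1$, all pull-backs of $t\notin\{0,1\}$ real) translate one-for-one into the structure of $\Gamma$.

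For the \emph{only if} direction I would stratify the space of almost generic real trigonal curves by the isotopy-and-decoration type of the dessin and examine the codimension-one walls. Along a generic fiberwise deformation the dessin is locally constant except at isolated instants where the curve ceases to be generic: a critical value of $j$ collides with $0$, $1$, or $\infty$, two monochrome critical values merge, or a conjugate pair of critical values crosses $\mathbb{P}^1_{\R}$. A local model at each such wall shows that the dessin changes by exactly one elementary move — monochrome modification, bridge creation or destruction, \white-in/out, or \black-in/out — so deformation-equivalent curves have equivalent dessins.

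For the \emph{if} direction, each elementary move is supported in a small disk, and I would exhibit an explicit one-parameter real family of maps $j$ (hence of curves) realizing it as a fiberwise deformation; since the listed moves generate the equivalence relation, equivalent dessins force deformation equivalence. I expect the main obstacle to be twofold. First, the reconstruction of $C$ from $j$ is not canonical: extracting the square and cube roots above determines $C$ only up to a twist, and one must verify that this ambiguity is absorbed by real isomorphism and deformation, so that it does not enlarge the class beyond what $\Gamma$ records. Second, and more seriously, proving that the four moves form a \emph{complete} list of codimension-one transitions, and that each wall is crossed transversally, is the technical heart of the argument; this is the point at which I would rely most heavily on the trichotomic-graph analysis of~\cite{DIK}.
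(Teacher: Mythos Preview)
The paper does not actually prove this proposition: it is stated with a \qed\ and the preceding sentence reads ``The following statement is proved in~\cite[5.3]{DIK}.'' So there is no argument here to compare against beyond the citation.

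That said, your sketch is essentially the strategy carried out in~\cite{DIK}: realizability via a Riemann-existence construction of the $j$-invariant from the combinatorics of the graph, and the deformation dictionary via a codimension-one wall analysis in the space of almost generic curves, with each wall producing one of the listed elementary moves. Two remarks are worth making. First, note that in \emph{this} paper a dessin is \emph{defined} as a graph in a disk isomorphic to some $\Gamma_C$ (see the opening of Section~\ref{rtc}), so within the present framework the realizability clause is tautological; the genuine content lies in~\cite{DIK}, where dessins are introduced axiomatically as trichotomic graphs and realizability is an honest theorem. Second, your reconstruction step ``$4b^3=j\Delta$, $27w^2=(1-j)\Delta$'' is circular as written, since $\Delta=4b^3+27w^2$; what one actually does is read off the divisors of $b$ and $w$ from the ramification of $j$ over $0$ and $1$ (multiplicities divisible by $3$ and $2$, respectively), and you correctly flag the residual twist ambiguity. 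The completeness of the list of elementary moves is indeed the technical core, and it is exactly what~\cite[5.3]{DIK} establishes.
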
 

\subsection{Indecomposable dessins}\label{normal-forms}

For $ j=1,2 $, let $ D_j $ be a disk, $\Gamma_j\subset D_j$ a dessin,  $I_j\subset\partial D_j$
a segment whose endpoints are
not vertices of~$\Gamma$. Let
$\Gf:I_1\to I_2$ be an isomorphism, \emph{i.e.}, a diffeomorphism of the
segments establishing a graph isomorphism
$\Gamma\cap I_1\to\Gamma\cap I_2$. 
Consider the
quotient $D_\Gf=(D_1\sqcup D_2)/\{x\sim\Gf(x)\}$ and the image
$\Gamma'_\Gf\subset D_\Gf$ of~$\Gamma$. Denote by~$\Gamma_\Gf$
the graph obtained from~$\Gamma'_\Gf$ by erasing the image
of~$I_1$, if $\Gf$ is orientation reversing, or converting the
images of the endpoints of~$I_1$ to monochrome vertices otherwise.

In what follows we always
assume that either $I_j$ is part of an edge of~$\Gamma_j$ or
$I_j$ contains a single \white-vertex. In the latter case, $\Gf$
is unique up to isotopy; in the former case, $\Gf$ is determined
by whether it is orientation preserving or orientation reversing.
If $\Gamma_\Gf$ is a dessin, it is called the result of
\emph{gluing} $\Gamma_1$, $\Gamma_2$ along~$\Gf$.
The image of~$I_1$ is called a \emph{cut}
in~$\Gamma_\Gf$. 
The cut is
called \emph{genuine} (\emph{artificial}) if $\Gf$ is orientation
preserving (respectively, reversing); it is called a \rm{solid},
\rm{dotted}, or \rm{bold} cut according to the structure of
$\Gamma\cap I_1$.
(The terms \rm{dotted} and \rm{bold} still apply
to cuts containing a \white-vertex.) A \emph{junction} is a genuine gluing 
 dessins along
isomorphic parts of two zigzags.

A dessin that is not equivalent to the result of gluing other
dessins is called \emph{indecomposable}.

\subsection{Unramified dessins.}
A dessin is called
\emph{unramified},
if all its \cross-vertices
are real.
In other words, unramified are the dessins corresponding
to maximally inflected curves.
In this subsection, we assume that~$\Gamma$
is an unramified dessin.

 Due to Theorem \cite[4.3.8]{DIZ}, the inner dotted edges of an unramified dessin of type I have \emph{types} $ 2 $ or $ 3 $.  Extend the notion of type of an inner dotted edge to the case of an unramified dessin of type II. Such an edge is of \emph{type} $ 3 $ or $ 2 $ if it can or cannot be adjacent to an inner \white-vertex (maybe after a \white-in).

The following theorem describes pillars of an unramified  dessin and its inner dotted edges. It is an immediate consequence of the definitions of unramified dessin and type of dotted edge.
\begin{theorem} \label{th.summary}
	Let $\Gamma$ be an unramified dessin of type~$\I$. Then
		\newcounter{N4}
	\begin{list}{\emph{(\arabic{N4})}}{\usecounter{N4}}
		\item
		the pillars of~$\Gamma$ are dotted \emph{(}ovals, zigzags\emph{)} or bold \emph{(}waves, jumps\emph{)};
		\item 
		a bold $ n $-pillar 
		is connected by $ n $ outgoing inner dotted edges of type~$ 2 $ with dotted		pillars, which are $ 0 $-pillars if $ \Gamma $ is bridge free; 
		\item two dotted pillars are interconnected by an inner dotted edge of type~$2$ or $3$,
		or a pair of edges of type~$3$ attached
		to an inner \white-vertex each;
		\item 
		if two dotted pillars 
		are interconnected by an inner dotted edge of type~$2$ then the edge is adjacent to a dotted bridge and the second inner edge adjacent to the bridge is of type~$2$. 
		\qed
	\end{list}
	\end{theorem}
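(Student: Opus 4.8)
The statement is local in nature, and the plan is to read off all possible configurations at and around each vertex from the decoration conventions of Section~\ref{rtc}, using three standing facts about an unramified dessin $\Gamma$ of type~$\I$: every \cross-vertex is real; every monochrome vertex has valency~$3$ (two real edges and one inner edge of the same kind) and there are no inner monochrome vertices; and, $\Gamma$ being generic, $j_C$ meets the value~$1$ tangentially, so every \white-vertex is a tangency point. First I would record the resulting local pictures. A \white-vertex on the real boundary is a tangency of $j_C$ to~$1$: from the bold side it carries two real bold edges and emits a single inner dotted edge, and from the dotted side two real dotted edges and a single inner bold edge. A real \cross-vertex is a simple pole, hence has valency~$2$, with one real dotted and one real solid edge and no inner edge. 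A real dotted monochrome vertex carries two real dotted edges and one inner dotted edge.

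For~(1) it suffices to match definitions. The real \cross-vertices bound the maximal dotted segments and the \black-vertices bound the maximal bold segments; these are exactly the dotted and bold pillars, which, by the conventions of Section~\ref{pillar}, are the ovals and zigzags (according to the parity of the number of \white-vertices) and the waves and jumps, respectively. There are no others, a pillar being dotted or bold by definition.

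The bulk of the effort goes into the bookkeeping of inner dotted edges. Since $\Gamma$ has no inner monochrome vertices and its \cross-vertices are real of valency~$2$ (so carry no inner edge), an inner dotted edge passes through no vertex in its interior, and its two endpoints are \white-vertices or real dotted monochrome vertices; attaching such an edge to the real boundary at a non-vertex point of a dotted segment produces a trivalent dotted monochrome vertex there, and this is what creates attachment points on dotted pillars. This gives the two regimes of the theorem. A bold $n$-pillar has exactly $n$ \white-vertices, each emitting one inner dotted edge by the tangency picture; tracing each such edge, it lands on a dotted pillar, and when $\Gamma$ is bridge free that pillar carries no \white-vertex, i.e. is a $0$-pillar, which is~(2). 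An inner dotted edge whose two ends both lie on dotted pillars gives the first alternative of~(3); performing a \white-in on it inserts one inner \white-vertex and replaces it by two edges, which are type~$3$ by the very definition of the type, giving the second alternative. By \cite[4.3.8]{DIZ} every inner dotted edge has type $2$ or $3$, so this exhausts the possibilities.

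The step I expect to be the main obstacle is the type analysis, in particular item~(4), since it is exactly here that the combinatorial definition of type (type~$3$ iff a \white-in can make the edge adjacent to an inner \white-vertex) must be turned into a structural statement. For the bold-pillar edges of~(2) I would check directly that the region adjacent to the \white-end admits no such \white-in, so those edges are type~$2$. For~(4), I would determine when a \white-in is obstructed on an edge joining two dotted pillars: unwinding the configuration, the obstruction is precisely an adjacent dotted bridge, i.e. two dotted monochrome vertices joined by a real dotted edge, each of valency~$3$, and the two inner dotted edges issuing from the bridge play symmetric roles, so if one is type~$2$ the companion is as well. Verifying that a dotted bridge is the \emph{only} obstruction, and hence that a type~$2$ dotted-to-dotted edge forces such a bridge, is the delicate part and needs the local analysis to be carried out with care; everything else reduces to the definitions, as the statement asserts.
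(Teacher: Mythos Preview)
The paper offers no proof here: the \qed\ is appended to the statement itself, and the preceding sentence says the theorem ``is an immediate consequence of the definitions of unramified dessin and type of dotted edge.'' Your plan to unpack this by reading off the local structure at each kind of vertex is exactly the content of that sentence, and items~(1)--(3) do fall out of the vertex pictures you record.

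The one substantive point concerns how you handle ``type~$2$'' versus ``type~$3$.'' For a type~$\I$ dessin these labels are \emph{not} defined via the \white-in criterion you invoke; that criterion is introduced in the present paper only as the extension to type~$\II$. For type~$\I$ the labels come from \cite[4.3.8]{DIZ}, where they are structural, tied to the complex-orientation data that type~$\I$ supplies. With that definition in hand the type of an inner dotted edge is read off directly from its endpoints and the orientation data, and items~(2) and~(4) are then genuinely immediate, as the paper asserts. The delicacy you anticipate in~(4) is an artifact of working through the \white-in characterization instead; your route is not wrong, but it proves a harder equivalent statement than is needed, and it leaves you with exactly the case analysis you flag as incomplete. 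If you want the argument to be as short as the paper implies, use the original type~$\I$ definition from~\cite{DIZ}.
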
 

\section{Skeletons}\label{S.skeletons}
Here the notion of skeleton introduced in \cite{DIZ} for  maximally inflected trigonal curves of type~$\I$ is extended, in the case of curves over the rational base, to  maximally inflected curves of any type.

Unramified dessins  
can be reduced
to 
simpler objects, the so called skeletons,
which are obtained by disregarding all but dotted edges.
The principal result of this section
is Theorem~\ref{cor.Sk} describing
maximally inflected trigonal curves 
in terms of skeletons.

\subsection{Abstract skeletons}\label{a.skeletons}
Consider an embedded (finite) graph $\Sk \subset \bbase$ in a
disk $\bbase$. We do not exclude the possibility that
some of the vertices of~$\Sk$ belong to the boundary of~$\bbase$;
such vertices are called
\emph{real}, the other ones are called \emph{inner}. The set of edges at each real (respectively, inner)
vertex~$v$ of~$\Sk$ inherits from~$\bbase$ a pair of opposite linear
(respectively, cyclic) orders.
The \emph{immediate neighbors} of an edge~$e$ at~$v$ are the immediate
predecessor and successor of~$e$ with respect to (any) one of these orders.
A
\emph{first neighbor path} in~$\Sk$ is a sequence of
oriented
edges of~$\Sk$
such that each edge is followed by one of
its immediate neighbors.

Below, we consider graphs with 
edges of two kinds: directed and undirected. We call
such graphs \emph{partially directed}.
The directed and undirected parts (the unions of the corresponding edges and adjacent vertices) of a partially directed
graph~$\Sk$ are denoted by~$\Skdir$ and~$\Skud$, respectively. 

\begin{opr}\label{def.a.skeleton}
	Let~$\bbase$ be a disk.
	An \emph{abstract skeleton} is a partially directed
	embedded graph $\Sk \subset \bbase$,
	disjoint
	from the boundary $\partial\bbase$ except for some
	vertices,
	and satisfying the following conditions:
	\newcounter{N5}
	\begin{list}{(\arabic{N5})}{\usecounter{N5}}
		\item\label{Sk.1} 
		each  vertex is \emph{white} or \emph{black}, any white vertex is real, any inner black vertex is isolated, 
		any edge adjacent to a black vertex (named \emph{source}) is directed outgoing, any black/white isolated  vertex belongs to $ \Skdir/\Skud $;
		\item\label{Sk.2}
		any  immediate neighbor of an incoming 
		edge is an outgoing one; 		
		\item\label{Sk.3}
		$\Sk$ has no first neighbor cycles; 
		\item\label{Sk.4}
	the boundary $\partial\bbase$ has a vertex of~$\Sk$;
		\item\label{Sk.5}
		$b_1+3b=v+z$ for each
		component~$R$ of the closed
		cut  $\bbase_{\Sk}$ of $ \bbase $ along $\Sk $,
		where $b_1$ is the number of  black vertices with a single directed adjacent edge at  $\partial R$,  $ v $ is the number of (black)  vertices  with two adjacent outgoing edges at $\partial R$, 
		$b$ 
		is the number of   black isolated vertices in $ R $,
		$ z $ is the number of connected components of $\Skud\cap\partial R$;
	\end{list}

If additionally 
\newcounter{N6}
\begin{list}{(\arabic{N6})}{\usecounter{N6}}
	\addtocounter{N6}{\value{N5}}
	\item\label{Sk.8} 
	$ \Skdir\cap\Skud=\varnothing $; 
	\item\label{Sk.7} at each vertex, there are no 
	directed outgoing edges that are immediate neighbors; 
		\item\label{Sk.9} each white vertex of $\Skdir$ has odd valency and is a \emph{sink} which means that the number of its adjacent incoming  edges is one greater than the number of outgoing edges;
	\item\label{Sk.10} each black vertex  is real  and monovalent (thus, is a source);
	\item\label{Sk.11} 	each boundary component~$l$ 
	of~$\bbase$ is subject
	to the \emph{parity rule}: vertices of~$\Skdir$ and $\Skud $ alternate along~$l$,
\end{list} 
	\par\removelastskip
	then $ \Sk $ is a \emph{type~$\I$ skeleton} and the condition (\ref{Sk.5}) could be omitted due to (\ref{Sk.11}).
\end{opr}
\begin{za}
	\label{Sk1}
	The  abstract skeleton $ \Sk $   defined in \cite[5.1.1]{DIZ}  is a type~$\I$ skeleton 
The condition (1) of Definition \cite[5.1.1]{DIZ} is fulfilled by (\ref{Sk.2}) and (\ref{Sk.7}) of Definition \ref{def.a.skeleton}, the conditions (2) and (3) by (\ref{Sk.9}) and (\ref{Sk.10}), the condition (4) by (\ref{Sk.3}) and (\ref{Sk.10}), the condition (5) by (\ref{Sk.4}) and (\ref{Sk.11})  
of Definition \ref{def.a.skeleton}, the condition (6) due to $ \bbase $ is a disk.   
\end{za}

\subsection{Admissible orientations}

\begin{opr}\label{def.adm.or}
	Let~$\Sk \subset \bbase$ be an abstract skeleton. Its \emph{orientation} is the directions of edges of $\Skdir$ together with arbitrary directions of edges of $ \Skud $.
		An orientation of~$\Sk$ 
		is called \emph{admissible} if, at each vertex, no two incoming
		edges are immediate neighbors.
		An \emph{elementary inversion} of an admissible orientation
		is the
		reversal
		of the direction for one of the edges of
		$\Skud$ so that the result is again an admissible orientation.
\end{opr}

\begin{prop} \label{prop.adm.or1}
	Any abstract skeleton~$\Sk$ has an admissible orientation.
	Any two admissible orientations of~$\Sk$
	can be connected by a sequence of elementary inversions.
\end{prop}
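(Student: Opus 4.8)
The plan is to reduce admissibility to a condition on the undirected part $\Skud$ alone, and then to prove both assertions through a single mechanism: a blocked reversal always propagates along a \emph{first neighbor path}, which by condition~(\ref{Sk.3}) cannot close up. First I would observe that, once the fixed directions of $\Skdir$ are in place, a violation of admissibility --- two incoming immediate neighbors at a vertex $v$ --- can only occur between two edges of $\Skud$. Indeed, by~(\ref{Sk.2}) every immediate neighbor of an incoming edge is outgoing, so a directed incoming edge is never adjacent, in the cyclic or linear order at~$v$, to any other incoming edge, directed or (newly oriented) undirected. Hence orienting~$\Sk$ admissibly amounts to choosing directions for the edges of~$\Skud$ so that no two immediate-neighbor undirected edges are simultaneously incoming at their common vertex; the edges of~$\Skdir$ play no role beyond possibly separating undirected edges in the local order.

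The common engine is the following. Suppose an orientation has a conflict at~$v$ (or, in the deformation argument below, a reversal is blocked at~$v$). Reversing the offending undirected edge $e=uv$ removes the trouble at~$v$, where $e$ becomes outgoing, but may create a new conflict at the opposite endpoint~$u$, necessarily between $e$ and some incoming immediate neighbor $e_1$ of~$e$ at~$u$. Passing from $e$ to~$e_1$ and iterating produces a sequence of edges in which each is an immediate neighbor of its predecessor at a shared vertex --- that is, a first neighbor path. By~(\ref{Sk.3}) such a path has no cycles, so, the skeleton being finite, it must terminate at an edge whose reversal creates no new conflict.

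For \textbf{existence}, I would start from an arbitrary orientation of~$\Skud$ (with $\Skdir$ as given) and repeatedly resolve conflicts: by the propagation just described each conflict can be pushed along a terminating first neighbor path and then eliminated by reversing the terminal edge, without reintroducing a conflict upstream along the path. Measuring progress by a potential adapted to the acyclic first neighbor order --- so that each reversal moves the conflict strictly downstream in the induced partial order, whose height is finite --- shows that the process terminates at an admissible orientation. For \textbf{connectivity}, given admissible $\omega,\omega'$ differing exactly on a set~$F$ of undirected edges, I would induct on~$|F|$. Attempt to reverse some $e\in F$ to its $\omega'$-direction; if this is blocked at the endpoint where $e$ becomes incoming, the blocking immediate neighbor $e_1$ is incoming there in~$\omega$ but, since $\omega'$ is admissible and orients $e$ into that vertex, must be outgoing there in~$\omega'$, so $e_1\in F$. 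Thus blocked reversals stay inside~$F$ and trace a first neighbor path, which by~(\ref{Sk.3}) terminates at an edge $e_k\in F$ whose reversal is unobstructed; by the reduction above only the endpoint at which $e_k$ turns incoming can matter, so the reversal yields again an admissible orientation differing from~$\omega'$ on $F\setminus\{e_k\}$. This is an elementary inversion, and induction finishes the proof.

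The main obstacle I anticipate is the bookkeeping in the propagation lemma: one must check that the chain of blocked reversals is genuinely a first neighbor path in the sense of the definition --- keeping careful track of at which endpoint each immediate-neighbor relation is read off, and of the traversal orientation relative to the current edge directions --- so that the acyclicity~(\ref{Sk.3}) actually applies. A secondary delicate point is pinning down the potential in the existence step so that conflict resolution provably terminates rather than cycling, since the raw count of conflicts need not be monotone under a single reversal.
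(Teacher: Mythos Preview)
Your proposal is correct and matches the paper's approach: condition~(\ref{Sk.2}) confines all conflicts---and hence the relevant first neighbor paths---to~$\Skud$, after which the acyclicity~(\ref{Sk.3}) drives both existence and connectivity exactly as in the type~I argument of~\cite{DIZ}, which is precisely what the paper invokes. The bookkeeping and termination issues you flag are real but routine; your connectivity argument is already the right one, and for existence a slightly cleaner variant is to induct on the number of undirected edges, using~(\ref{Sk.3}) to locate an edge with an endpoint having no undirected immediate neighbor.
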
 

\begin{prop} \label{prop.adm.or2}
	Let $\Sk \subset \bbase$ be an abstract skeleton
	and $e_1$, $e_2$ two distinct edges of $ \Skud $.
	Then, out of the four \emph{states} \emph{(}orientations\emph{)} of the pair $e_1$, $e_2$,
	at least three extend to an admissible orientation.
\end{prop}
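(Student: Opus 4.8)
The plan is to deduce Proposition~\ref{prop.adm.or2} from Proposition~\ref{prop.adm.or1} together with one auxiliary fact: that no single edge of~$\Skud$ is \emph{frozen}, i.e.\ that each edge of~$\Skud$ is oriented in both possible ways by suitable admissible orientations. First I would package the admissible orientations as a subset~$\mathcal A$ of the Hamming graph on~$\{+,-\}^{\Skud}$, two orientations being adjacent when they differ in exactly one edge of~$\Skud$. By the second assertion of Proposition~\ref{prop.adm.or1}, any two admissible orientations are joined by a chain of elementary inversions, each reversing a single edge of~$\Skud$ and staying admissible; hence~$\mathcal A$ is a \emph{connected} subset of this Hamming graph. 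The map $f\colon\mathcal A\to\{+,-\}^2$ recording the directions of~$e_1,e_2$ is a coordinate projection, so it carries each Hamming edge either to an edge of the $4$-cycle $\{+,-\}^2$ or to a single vertex. Consequently the set $R=f(\mathcal A)$ of realizable states is a nonempty \emph{connected} subset of this $4$-cycle.

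Next I would observe that the desired conclusion $|R|\ge 3$ is equivalent to the non-freezing of both $e_1$ and $e_2$. The two diagonal pairs of the $4$-cycle span no edges, so a connected~$R$ with $|R|=2$ must be one of its edges; such an edge is exactly a pair of states sharing the direction of one of $e_1,e_2$, that is, that edge is frozen, and $|R|=1$ freezes both. Conversely, if neither $e_1$ nor $e_2$ is frozen then both coordinate projections of~$R$ are onto, which for a connected subset of the $4$-cycle forces $|R|\ge 3$, since a connected subset of size~$\le 2$ has a constant coordinate. Thus it suffices to prove that an arbitrary edge $e\in\Skud$ is not frozen.

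For this non-freezing statement—the heart of the matter—I would start from any admissible orientation~$\omega$, which exists by Proposition~\ref{prop.adm.or1}, in which~$e$ points, say, from~$u$ to~$v$, and produce an admissible orientation with~$e$ reversed. Reversing~$e$ relaxes every constraint at~$v$ and can create a conflict only at~$u$, with an immediate neighbor of~$e$ that is incoming at~$u$; by condition~(\ref{Sk.2}) such a neighbor cannot be a directed edge, so it lies in~$\Skud$ and may itself be reversed. Reversing it cures the conflict at~$u$ but may push a new conflict to its far endpoint, again against an immediate neighbor, and so on. The successively reversed edges, each taken oriented toward the vertex at which it newly becomes incoming, form a \emph{first neighbor path}; since by condition~(\ref{Sk.3}) the skeleton has no first neighbor cycles, this propagation cannot return to an already reversed edge and, the skeleton being finite, must terminate at a vertex where the newly incoming edge has no incoming immediate neighbor. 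The orientation obtained by reversing precisely the edges of the path is then admissible and reverses~$e$, so~$e$ is not frozen. The main obstacle I anticipate is exactly the bookkeeping of this propagation: at a vertex of valency exceeding three the reversal of an edge may conflict with \emph{both} of its cyclic neighbors, so the process can branch, and one must verify that the several first neighbor paths so spawned are simultaneously consistent and all terminate. Controlling this branching—showing that the full set of reversed edges can be chosen so that the resulting orientation is admissible—is where condition~(\ref{Sk.3}) does the real work, and is the step on which I would spend the most care.
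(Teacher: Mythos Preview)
Your plan follows the paper's line: the proof there is little more than a pointer to Propositions~5.2.2 and~5.2.3 of~\cite{DIZ}, together with the single observation that condition~(\ref{Sk.2}) of Definition~\ref{def.a.skeleton} prevents a first neighbor path from returning to~$\Skud$ once it has entered~$\Skdir$, so the type~I argument---which runs entirely inside~$\Skud$---applies unchanged. Your propagation argument, and your use of condition~(\ref{Sk.2}) to guarantee that every edge you are forced to reverse lies in~$\Skud$, is thus exactly the paper's mechanism.

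The Hamming-graph packaging and the reduction of Proposition~\ref{prop.adm.or2} to the non-freezing of a single edge are your own; the reduction is sound and neatly separates the $4$-cycle combinatorics from the graph theory, though it makes Proposition~\ref{prop.adm.or2} rest on the connectedness clause of Proposition~\ref{prop.adm.or1}, whereas in~\cite{DIZ} the two statements are handled by parallel first-neighbor-path arguments. The branching you flag is indeed the crux and is not mere bookkeeping: at a real white vertex both immediate neighbours of the newly incoming edge may already be incoming (they are separated by that edge, so admissibility of~$\omega$ does not forbid this), and both must then be reversed. Two fronts that met on a common edge would concatenate to a first neighbour cycle, so condition~(\ref{Sk.3}) is what rules this out---but converting that observation into a clean termination and consistency statement is precisely the content supplied in~\cite{DIZ}, and your proposal, as you yourself note, stops just short of it.
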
 

\begin{proof} [Proof of Propositions~\ref{prop.adm.or1}
	and~\ref{prop.adm.or2}]
	In the case of type~$\I$ skeleton, the proof of the corresponding propositions \cite[5.2.2, 5.2.3]{DIZ}  uses first neighbor paths in $\Skud $. In the general case, due to the condition (\ref{Sk.2}) of  Definition \ref{def.a.skeleton}, any first neighbor path cannot return from $ \Skdir $ to $\Skud $. So it is sufficient to consider 
	such paths only in $ \Skud $ and the proof for type~$\I$ skeletons fits for the general case.
\end{proof} 

\subsection{Equivalence of abstract skeletons}\label{equivalence}
Two abstract skeletons
are called \emph{equivalent} if, after a homeomorphism
of underlying disks,
they can be connected by a finite sequence of isotopies
and the following \emph{elementary moves},
\emph{cf}.~\ref{ss.equivalence}:
\begin{itemize}
	\item[--] \emph{elementary modification}, see Figure~\ref{fig.Sk} (a);
	\item[--] \emph{creating} (\emph{destroying}) \emph{a bridge},
	see Figure~\ref{fig.Sk} (b);
	the vertex shown in the figure is white, 
	other edges of $\Sk$ adjacent to the vertex
	 may
	be present;
	\item[--] \emph{creating} (\emph{destroying}) \emph{an undirected edge},
	see Figure~\ref{fig.Sk} (c); the vertex shown in the figure is black real, the adjacent  edges are immediate neighbors, other adjacent directed outgoing edges  may	be present, the edge on the right side of the figure is undirected;
	\item[--] \emph{\black-in} and its inverse \emph{\black-out}, see
	Figure~\ref{fig.Sk} (d), (e); all vertices shown in the figures  
	are black, other  directed outgoing edges  adjacent to the real vertices	 may	be present in the figure (d).
\end{itemize}
\begin{figure}[tb]
	\begin{center}
		\includegraphics{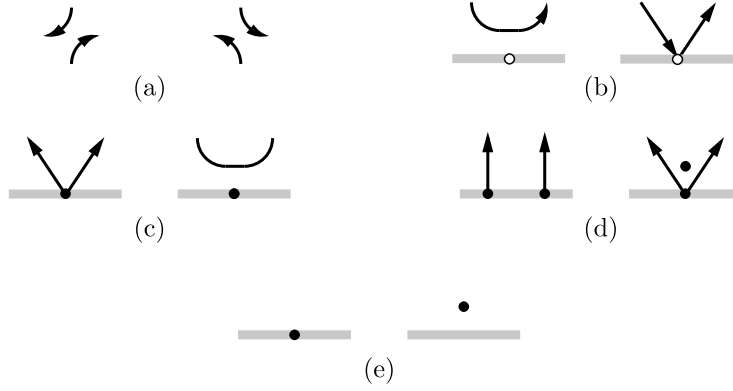}\\
	\end{center}
	\caption{Elementary moves of skeletons}\label{fig.Sk}
\end{figure}
(A move is valid only if the result is again
an abstract skeleton.)
Moves (a) and (b) must 
respect some admissible orientation
of the original skeleton
and take
it to an admissible orientation of the result.

An equivalence of two abstract skeletons in the same disk
and with the same set of vertices is called \emph{restricted}
if the homeomorphism is identical and
the isotopies above can be chosen identical on the vertices.

\subsection{Dotted skeletons}\label{s.skeletons}
Intuitively, a dotted skeleton is obtained from a
dessin $\Gamma$ by disregarding all but \rm{dotted} edges
and patching the latter through all \white-vertices.
According to  Theorem~\ref{th.summary}, each inner
dotted edge of type~$2$ retains a well defined orientation,
whereas an edge of type~$3$ may be broken by \white-vertex,
and for this reason, its orientation may not be defined.

\begin{opr}\label{def.skeleton}
	Let
	$\Gamma \subset \base$
	be an unramified dessin.
	The (\emph{dotted}) \emph{skeleton} of~$\Gamma$
	is a partially directed graph
	$\Sk=\Sk_\Gamma\subset\bbase$
	obtained from~$\Gamma$ as follows:
	\begin{itemize}
		\item[--] contract each
		pillar
		to a single
		point and declare this point a vertex of~$\Sk$, white for a maximal \rm{dotted} segment and black for a \rm{bold} one;
		\item[--] replace each inner \black-vertex of~$\Gamma$ by an  inner black vertex of~$\Sk$;
		\item[--]
		patch each inner \rm{dotted} edge
		through its
		\white-vertex, if there is one,
		and declare the result an edge of~$\Sk$;
		\item[--] let~$\Skdir$ ($\Skud$) be  the union of  black (respectively, white) isolated vertices and the closures of the images
		of the  edges of type~$2$ (respectively, $3$), 
		each edge of type~$2$ inheriting its
		orientation from~$\Gamma$.
	\end{itemize}
	Here, $\bbase$ is the
	disk
	obtained from~$\base$ by contracting
	each pillar to a single point.
\end{opr}

\begin{prop} \label{prop.Sk=Sk}
	The skeleton~$\Sk$ of a dessin~$\Gamma$
	as in Definition~\ref{def.skeleton}
	is an abstract skeleton in the sense of
	Definition~\ref{def.a.skeleton}.
\end{prop}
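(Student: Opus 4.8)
The plan is to verify, one by one, that the partially directed graph $\Sk=\Sk_\Gamma$ produced by Definition~\ref{def.skeleton} satisfies each of the five defining conditions (\ref{Sk.1})--(\ref{Sk.5}) of an abstract skeleton. The whole argument is essentially a translation: every structural feature of $\Sk$ comes from a feature of the unramified dessin $\Gamma$, and the constraints on dessins (no oriented monochrome cycles, genericity, and especially the combinatorial description of pillars and inner dotted edges given by Theorem~\ref{th.summary} together with its type~$\II$ extension) must be shown to force exactly the listed conditions. First I would fix notation and record how the contraction $\base \to \bbase$ acts: each dotted pillar becomes a white vertex, each bold pillar a black vertex, each inner $\black$-vertex of $\Gamma$ becomes an isolated inner black vertex of $\Sk$, and inner dotted edges of types~$2$ and~$3$ become the directed and undirected edges of $\Skdir$ and $\Skud$ respectively (edges of type~$3$ patched through their $\white$-vertex).

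Next I would check conditions (\ref{Sk.1})--(\ref{Sk.4}), which are the ``local'' ones and should follow directly from the construction and from Theorem~\ref{th.summary}. For (\ref{Sk.1}): white vertices arise from dotted pillars, which are real, so they are real; inner black vertices come from inner $\black$-vertices of $\Gamma$ and are isolated by genericity (the valency statement on monochrome vertices and the structure of an unramified dessin); the ``source'' property of black vertices---every adjacent edge outgoing---is precisely the content that inner dotted edges of type~$2$ emanating from a bold pillar are outgoing, as recorded in part~(2) of Theorem~\ref{th.summary}. Conditions (\ref{Sk.2}) and (\ref{Sk.3}) are the immediate-neighbor and acyclicity constraints; (\ref{Sk.3}) should follow from the fact that $\Gamma$ has no oriented monochrome cycles, transported through the contraction, while (\ref{Sk.2}) is a neighbor-compatibility statement that I expect to read off from how type~$2$ edges attach at pillars. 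Condition (\ref{Sk.4}) is essentially automatic since $\Gamma$ meets $\partial\base$ and the contracted disk $\bbase$ inherits boundary vertices.

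The main obstacle I anticipate is condition (\ref{Sk.5}), the numerical balance $b_1 + 3b = v + z$ over each region $R$ of the cut $\bbase_{\Sk}$. This is a genuine counting identity rather than a formal translation, so I would handle it by an Euler-characteristic or boundary-counting argument on the corresponding region of $\base_\Gamma$ before contraction. The idea is to relate the quantities $b_1$ (black vertices with a single directed adjacent edge on $\partial R$), $v$ (black vertices with two outgoing edges on $\partial R$), $b$ (isolated black vertices inside $R$), and $z$ (components of $\Skud\cap\partial R$) to the essential vertices and triangular-region structure of $\Gamma$, then invoke the degree/pillar bookkeeping for unramified dessins. Concretely, I would express both sides in terms of how many $\cross$- and $\black$-vertices and how many dotted/bold pillars bound the preimage region, using that each triangular region carries a fixed contribution; the factor $3$ multiplying $b$ should reflect the trivalence of inner $\black$-vertices, and the identity should then reduce to an Euler relation $\chi(R)=1$ for a disk region.

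Finally I would observe that the additional type~$\I$ conditions (\ref{Sk.8})--(\ref{Sk.11}) need not be verified in general, since Proposition~\ref{prop.Sk=Sk} only claims that $\Sk$ is an abstract skeleton; by Remark~\ref{Sk1} those extra conditions characterize the type~$\I$ case and, when they hold, (\ref{Sk.5}) becomes redundant. Thus for a type~$\II$ dessin the crux is really (\ref{Sk.5}), and I would structure the write-up so that (\ref{Sk.1})--(\ref{Sk.4}) are dispatched quickly by appeal to Definition~\ref{def.skeleton} and Theorem~\ref{th.summary}, with the bulk of the effort concentrated on the counting argument for the balance condition.
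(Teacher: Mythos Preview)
Your overall plan---verify conditions (\ref{Sk.1})--(\ref{Sk.5}) one at a time from Theorem~\ref{th.summary} and the dessin axioms---matches the paper for (\ref{Sk.1}), (\ref{Sk.2}), and (\ref{Sk.4}); for (\ref{Sk.3}) the paper simply cites the argument of \cite[5.4.3]{DIZ}, which does indeed rest on the absence of oriented monochrome cycles, so your instinct there is correct.

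Where you diverge is condition (\ref{Sk.5}). You flag it as the main obstacle and propose an Euler-characteristic reduction to $\chi(R)=1$. The paper disposes of it in a single sentence by a much more direct count: since the cut $\bbase_{\Sk}$ is taken along \emph{dotted} edges only, every inner \emph{bold} edge of $\Gamma$ lies entirely inside one region $R$, and one just counts the two ends of each such edge. The left-hand side $b_1+3b$ is the number of outgoing inner bold half-edges in $R$ (each inner \black-vertex contributes three, each real \black-vertex visible from $R$ contributes one), while the right-hand side $v+z$ is the number of incoming inner bold half-edges (landing at \white-vertices). Equality is then immediate. So (\ref{Sk.5}) is not the crux at all; your Euler-characteristic route might be pushed through, but it is considerably heavier and hides the reason for the coefficient $3$, which the bold-edge count makes transparent. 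I would recommend replacing that part of your plan with the direct bold-edge balance.
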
 

\begin{proof} 
	Properties~(\ref{Sk.1}), (\ref{Sk.2}) of Definition \ref{def.a.skeleton}
	follow immediately from
	Theorem~\ref{th.summary}. Property~(\ref{Sk.4}) is fulfilled since all the \cross-vertices of $\Gamma$ are real and, hence, $\Gamma$ has pillars.
	Property~(\ref{Sk.5}) is fulfilled since the left-hand (right-hand) side of the equality is the number of outgoing  (resp., incoming) inner bold edges in $ R $. The proof of Property~(\ref{Sk.3}) given in \cite[5.4.3]{DIZ}  fits for a skeleton of any type.
\end{proof} 

\begin{prop} \label{Sk.extension}
	Any abstract skeleton $\Sk\subset\bbase$ is the
	skeleton
	of a certain dessin~$\Gamma$
	as in Definition~\ref{def.skeleton};
	any two such dessins can be connected by a sequence
	of isotopies and elementary moves,
	see~\ref{ss.equivalence},
	preserving the skeleton.
\end{prop}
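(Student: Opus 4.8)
The plan is to prove the two assertions separately: first that every abstract skeleton $\Sk$ is realized as $\Sk_\Gamma$ for some unramified dessin $\Gamma$, and then that the realizing dessin is unique up to skeleton-preserving isotopies and elementary moves. The type~$\I$ case of both assertions is established in~\cite{DIZ}, so the task is to verify that the construction and the move-counting argument survive the passage to type~$\II$; the only essential new feature is the presence of edges genuinely of type~$3$ together with weaker orientation data, so the argument must be organized around an admissible orientation rather than around the canonical orientation available in the type~$\I$ case.

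For existence, I would reverse the four bullets of Definition~\ref{def.skeleton} explicitly. Start by choosing an admissible orientation of $\Sk$, which exists by Proposition~\ref{prop.adm.or1}; this turns every edge of $\Skud$ into an oriented edge and lets me treat $\Skdir$ and $\Skud$ uniformly. Next, pass to $\base$ by blowing up each boundary vertex of $\Sk$ into a pillar---a maximal dotted segment at each white vertex and a maximal bold segment at each real black vertex---while each inner black vertex of $\Sk$ is kept as an inner \black-vertex of $\Gamma$; the number of \white-vertices inserted into each dotted pillar is dictated by whether the pillar must be an oval or a zigzag. Each directed edge is then drawn as an oriented inner dotted edge of type~$2$ and each edge of $\Skud$ as an inner dotted edge of type~$3$, routed through a fresh inner \white-vertex when it is broken, exactly as in the last bullet of Definition~\ref{def.skeleton}. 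What remains is to fill in the bold and solid edges and the essential vertices inside each region $R$ of $\Sk$: here condition~(\ref{Sk.5}) is precisely the statement that, in $R$, the number $b_1+3b$ of outgoing inner bold edges matches the number $v+z$ of incoming ones, so the bold edges can be paired up and the complementary solid and triangular structure completed to a genuine dessin. Conditions~(\ref{Sk.2}) and~(\ref{Sk.3}), through Theorem~\ref{th.summary} and the first-neighbor-path description, then guarantee the correct valencies at monochrome vertices, the absence of oriented monochrome cycles, and that all \cross-vertices are real, so that the result is an unramified dessin with $\Sk$ as its skeleton.

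For uniqueness, let $\Gamma_1$ and $\Gamma_2$ be two dessins with $\Sk_{\Gamma_1}=\Sk_{\Gamma_2}=\Sk$. By construction they share the same pillars and the same type-$2$ and type-$3$ inner dotted edges, so they can differ only in data invisible to the skeleton: the placement and number of bridges, the position of the inner \white-vertices used to break type-$3$ edges, and the bold/solid filling of each region. The plan is to bring both dessins to a common normal form inside each region by the moves of~\ref{ss.equivalence}. Destroying all bridges, see Figure~\ref{fig.moves}(b), and applying \white-out and \black-out wherever possible reduces the non-dotted structure to a canonical filling of each region that depends only on the pairing of inner bold edges fixed by~(\ref{Sk.5}); monochrome modifications then reconcile any two such pairings. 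Each of these moves changes only bold, solid, or monochrome data and leaves every pillar and every inner dotted edge of type~$2$ or~$3$ untouched, hence preserves the skeleton, so the resulting sequence realizes the required skeleton-preserving equivalence.

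The main obstacle will be the uniqueness half, and specifically the claim that the listed elementary moves are \emph{complete} for the non-dotted structure while being \emph{skeleton-preserving}. Completeness requires showing that any two admissible fillings of a single region by bold and solid edges are connected by monochrome modifications and bridge moves without ever altering a dotted edge or merging two pillars; this is where one must exploit the balancing~(\ref{Sk.5}) together with the absence of oriented monochrome cycles~(\ref{Sk.3}) to set up an induction on, say, the number of inner bold edges or the total number of monochrome vertices in a region. The existence half is comparatively routine once the admissible orientation is fixed, the only delicate point there being to check that the chosen routing of type-$3$ edges through inner \white-vertices is compatible, region by region, with the bold-edge pairing forced by~(\ref{Sk.5}).
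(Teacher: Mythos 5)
Your overall strategy coincides with the paper's: existence via blowing up the real vertices into pillars, fixing an admissible orientation to place the \white-vertices, and filling each region using the balance condition~(\ref{Sk.5}); uniqueness via normalizing the non-dotted data region by region. However, the two steps you yourself flag as ``the delicate point'' and ``the main obstacle'' are exactly the steps that constitute the actual content of the proof, and you leave both unproved. For existence, saying that ``the bold edges can be paired up and the complementary solid and triangular structure completed to a genuine dessin'' is not yet an argument: one must show that the pairing of incoming and outgoing germs can be realized by \emph{disjoint} edges inside the disk region, respecting the cyclic order on the boundary and accommodating the inner \black-vertices. The paper does this in~\ref{ss.solid} by first joining each inner \black-vertex to a real solid monochrome vertex, then traversing an explicit cycle $\sigma$ along $\partial R'$, observing that between any two neighboring bold germs of opposite orientation the solid germs alternate, converting them to edges in nested pairs, and cutting off the resulting regions iteratively. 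Without this (or an equivalent planarity argument) the existence half is incomplete.

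For uniqueness, your proposed normal form --- destroy all bridges and apply \white-out and \black-out ``wherever possible,'' then reconcile by monochrome modifications --- is not obviously attainable and is in fact the opposite of what the paper does: the paper's normalization \emph{creates} bridges and performs \white-ins and \black-ins (see~\ref{ss.scrap} and~\ref{ss.source}) in order to produce solid artificial cuts that split each region (a scrap in the sense of \cite[5.7]{DIK}) into smaller ones. The actual induction is: cut off a source together with a neighboring \white-vertex (\ref{ss.source}), cut off a sink together with a neighboring \white-vertex (\ref{ss.sink}), and, once no real \black-vertices remain, use the relation $b=3z$ forced by~(\ref{Sk.5}) to partition the boundary vertices into groups of six and cut off each group with its inner \black-vertex (\ref{ss.black}, \ref{ss}), terminating in triangular regions or halves of the type~I cubic. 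Your suggestion of ``an induction on the number of inner bold edges or the total number of monochrome vertices'' points in the right direction but supplies neither the inductive step nor the terminal objects, so the completeness of the elementary moves --- which is the heart of the proposition --- remains unestablished.
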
 

\begin{prop} \label{prop.Sk}
	Let $\Gamma_1, \Gamma_2 \subset \base$
	be two
	dessins as in Definition~\ref{def.skeleton};
	assume that~$\Gamma_1$ and~$\Gamma_2$
	have the same pillars.
	Then, $\Gamma_1$ and $\Gamma_2$
	are related by a restricted equivalence
	if and only if so are the corresponding
	skeletons~$\Sk_1$ and~$\Sk_2$.
\end{prop}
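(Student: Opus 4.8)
The plan is to prove Proposition~\ref{prop.Sk} by establishing a dictionary between the elementary moves of dessins (Section~\ref{ss.equivalence}) and those of skeletons (Section~\ref{equivalence}), all carried out under the restricted regime where the pillars are fixed as sets and the underlying homeomorphism is identical. The key observation is that the construction of Definition~\ref{def.skeleton} is functorial with respect to the identical homeomorphism: since both $\Gamma_1$ and $\Gamma_2$ have the same pillars, they collapse to skeletons $\Sk_1, \Sk_2$ sitting in the \emph{same} disk $\bbase$ with the same vertex set, so that the notion of restricted equivalence makes sense on both sides. First I would treat the easier implication, namely that a restricted equivalence of skeletons lifts to one of dessins. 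Here Proposition~\ref{Sk.extension} does most of the work: given $\Sk_1 = \Sk_2 =: \Sk$ after an identical restricted skeleton-equivalence, any two dessins inducing the same skeleton are connected by skeleton-preserving dessin moves, and one checks these moves preserve the pillars.

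For the forward direction I would analyze how each dessin move of~\ref{ss.equivalence} descends to the skeleton after applying the contraction of Definition~\ref{def.skeleton}. A monochrome modification or bridge creation/destruction on \rm{dotted} edges descends directly to the elementary modification or bridge move of Figure~\ref{fig.Sk}(a),(b); because these dessin moves are required to respect an admissible orientation, the induced skeleton moves inherit the orientation compatibility demanded in Section~\ref{equivalence}. The \white-in and \white-out moves correspond to patching or breaking a type~$3$ edge through an inner \white-vertex, which on the skeleton side is precisely absorbed into the undirected part $\Skud$ and so leaves the skeleton unchanged or realizes the creating/destroying of an undirected edge in Figure~\ref{fig.Sk}(c). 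The \black-in and \black-out moves translate to the skeleton \black-in/\black-out of Figure~\ref{fig.Sk}(d),(e), acting on the black (source) vertices coming from \rm{bold} pillars and inner \black-vertices. Throughout, I would invoke Theorem~\ref{th.summary} to guarantee that the orientations of type~$2$ edges are well defined and that the combinatorial adjacencies required by each skeleton move are exactly those produced by the corresponding dessin move.

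The main obstacle I anticipate is the bookkeeping needed to show that the correspondence between moves is \emph{complete and faithful}: one must verify that no dessin move can change the skeleton in a way not captured by the listed skeleton moves, and conversely that each skeleton move is realized by a dessin move that does not disturb the pillars. The subtle point is the treatment of type~$3$ edges that may or may not carry an inner \white-vertex; a \white-in applied to such an edge changes the local picture of the dessin without changing $\Sk$, so I must argue that the resulting ambiguity is exactly the content of Proposition~\ref{Sk.extension} (all dessins with a given skeleton are equivalent rel skeleton) and therefore does not obstruct the equivalence. A related delicate issue is ensuring that the admissibility/orientation condition attached to the skeleton moves (a) and (b) matches the ``result is again a dessin'' validity constraint on the dessin side, i.e.\ that the absence of oriented monochrome cycles on the dessin corresponds precisely to the absence of first neighbor cycles and the admissible-orientation requirement on the skeleton.

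Once both implications are assembled, the proposition follows by concatenating the move-by-move translations in either direction. I would organize the proof as a short lemma recording the one-to-one correspondence of elementary moves (possibly up to the skeleton-preserving moves of Proposition~\ref{Sk.extension}), followed by the observation that restricted equivalence is generated by these moves on each side, so that $\Gamma_1 \sim \Gamma_2$ rel pillars if and only if $\Sk_1 \sim \Sk_2$ in the restricted sense. The hypothesis that the pillars coincide is what allows the identical-homeomorphism clause of restricted equivalence to be maintained at every step, and I would flag explicitly where this hypothesis is used so that the reader sees it is essential rather than merely convenient.
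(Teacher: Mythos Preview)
Your overall architecture is right --- the proof is a move-by-move dictionary, with Proposition~\ref{Sk.extension} absorbing the ambiguity --- but you have the two directions inverted in difficulty, and this leads to a genuine gap in the direction you call ``easier''.

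The implication $\Gamma_1\sim\Gamma_2\Rightarrow\Sk_1\sim\Sk_2$ is the trivial one. The paper dispatches it in a single sentence: any elementary move of a dessin either leaves the skeleton unchanged or induces an elementary modification of the skeleton (respecting an admissible orientation, cf.~\ref{ss.white}). Your detailed case analysis of \white-in/out, \black-in/out, etc.\ is more than is needed here, and some of the proposed correspondences (e.g.\ \white-in/out producing the move of Figure~\ref{fig.Sk}(c)) are not quite right; move~(c) involves a \emph{black} real vertex and two directed neighbors, which is not what a \white-in touches.

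The real content lies in the converse, and your sketch for it does not work as written. You write ``given $\Sk_1=\Sk_2=:\Sk$ after an identical restricted skeleton-equivalence'', but a restricted equivalence of skeletons is a \emph{sequence of elementary moves}, not an identification of graphs; at the end $\Sk_1$ has been transformed into $\Sk_2$, yet $\Gamma_1$ still has skeleton $\Sk_1$, so Proposition~\ref{Sk.extension} (uniqueness for a \emph{fixed} skeleton) cannot be invoked directly. What is missing is the mechanism for lifting a single skeleton move to a dessin move. The paper's device is this: at the moment of a skeleton transformation (a), (c)--(e), regard the intermediate configuration as a skeleton and run the \emph{construction} of Proposition~\ref{Sk.extension} to produce a specific dessin~$\Gamma$; this~$\Gamma$ is a valid dessin both just before and just after the move, and by the uniqueness part of Proposition~\ref{Sk.extension} one may assume the original dessin \emph{is}~$\Gamma$. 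The skeleton move is then literally an elementary move of~$\Gamma$. Bridges are handled separately but by the same uniqueness principle. Without this ``extend at the transformation moment'' trick, your proposed lifting remains an assertion rather than an argument.
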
 

Propositions~\ref{Sk.extension} and~\ref{prop.Sk}
are proved in Subsections~\ref{proof.prop} and~\ref{proof.prop1}, some points of proofs of the cooresponding Propositions \cite[5.4.4, 5.4.5]{DIZ} being used.
Here, we state the following immediate consequence.

\begin{theorem} \label{cor.Sk}
	There is a canonical
	bijection
	between the set of
	equivariant fiberwise deformation classes of maximally inflected
	real trigonal curves  and the set of equivalence
	classes
	of abstract skeletons.
	\qed
\end{theorem}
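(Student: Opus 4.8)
The plan is to assemble Theorem~\ref{cor.Sk} as the composite of three functorial correspondences, using the chain of objects
\[
\{\text{maximally inflected curves}\}
\;\longleftrightarrow\;
\{\text{unramified dessins}\}
\;\longleftrightarrow\;
\{\text{abstract skeletons}\},
\]
read on both the level of objects and the level of the relevant equivalence relations. The first arrow is essentially Proposition~\ref{equiv.curves}: a maximally inflected real trigonal curve over the rational base is generic (after perturbation, and the perturbation stays within the deformation class), its dessin $\Gamma_C$ is unramified because all critical points of $\pi_C$ are real, and deformation equivalence of curves matches equivalence of dessins. So the proof of the theorem reduces to producing a bijection between equivalence classes of unramified dessins and equivalence classes of abstract skeletons, which is exactly the content packaged by Definition~\ref{def.skeleton} together with Propositions~\ref{prop.Sk=Sk}, \ref{Sk.extension}, and~\ref{prop.Sk}.

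Accordingly, I would organize the argument in three steps. First, \emph{well-definedness}: by Proposition~\ref{prop.Sk=Sk} the skeleton $\Sk_\Gamma$ of any unramified dessin is an abstract skeleton, so $\Gamma \mapsto \Sk_\Gamma$ descends to a map on equivalence classes once we know elementary moves of dessins are taken to elementary moves (or isotopies) of skeletons; this compatibility is the restricted-equivalence statement of Proposition~\ref{prop.Sk} applied locally, combined with the fact that the non-restricted moves either act inside a single pillar (hence are invisible after contraction) or correspond to the skeleton moves of~\ref{equivalence}. Second, \emph{surjectivity}: Proposition~\ref{Sk.extension} guarantees every abstract skeleton $\Sk$ arises as $\Sk_\Gamma$ for some unramified dessin $\Gamma$, so the induced map on classes is onto. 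Third, \emph{injectivity}: suppose $\Sk_{\Gamma_1}$ and $\Sk_{\Gamma_2}$ are equivalent. Using the second half of Proposition~\ref{Sk.extension} to normalize pillars, and Proposition~\ref{prop.Sk} to promote a restricted skeleton equivalence to a restricted dessin equivalence, one lifts the chain of skeleton moves to a chain of dessin moves, whence $\Gamma_1$ and $\Gamma_2$ are equivalent and the corresponding curves are deformation equivalent by Proposition~\ref{equiv.curves}.

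The bijection is then \emph{canonical} in the sense that it does not depend on the representatives chosen: any two dessins with a given skeleton are interchangeable by Proposition~\ref{Sk.extension}, and any two curves with a given dessin are deformation equivalent by Proposition~\ref{equiv.curves}, so both ``fibers'' of the composite correspondence are single equivalence classes. I would write the theorem's proof as a short paragraph citing exactly these four propositions, since the word ``immediate consequence'' in the text signals that no new combinatorial input is intended.

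The step I expect to be the genuine obstacle — and the reason the real work is deferred to Propositions~\ref{Sk.extension} and~\ref{prop.Sk} rather than to the theorem itself — is matching the two equivalence relations across the non-restricted moves, i.e.\ bookkeeping of what happens inside pillars and at the homeomorphism level. Concretely, the subtlety is that equivalence of dessins allows an arbitrary homeomorphism of the disk and moves that create or destroy \white-vertices within a pillar, changing a pillar's parity and hence its interpretation (oval versus zigzag, wave versus jump as in~\ref{pillar}); one must check that the induced operations on skeletons are precisely the moves of~\ref{equivalence} (elementary modification, bridge creation/destruction, undirected-edge creation/destruction, and \black-in/out) and that the admissible-orientation constraints of Definition~\ref{def.adm.or}, controlled by Proposition~\ref{prop.adm.or1}, are preserved throughout. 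Once Propositions~\ref{Sk.extension} and~\ref{prop.Sk} are in hand this matching is automatic, but it is where the essential care lies.
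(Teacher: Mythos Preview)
Your proposal is correct and matches the paper's approach exactly: the theorem is stated there as an immediate consequence of Propositions~\ref{Sk.extension} and~\ref{prop.Sk} (together with Propositions~\ref{equiv.curves} and~\ref{prop.Sk=Sk}), with no further argument given. Your three-step organization into well-definedness, surjectivity, and injectivity is precisely the natural unpacking of that ``immediate consequence,'' and your identification of the real work as residing in Propositions~\ref{Sk.extension} and~\ref{prop.Sk} is exactly how the paper allocates the effort.
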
 

\subsection{Proof of Proposition~\ref{Sk.extension}}\label{proof.prop}
The underlying disk~$\base$ containing~$\Gamma$
is the orientable blow-up $\beta:\base\rightarrow\bbase $ of~$\bbase$ at the real vertices of~$\Sk$:
each boundary vertex~$v$ is replaced with
the  segment of directions at~$v$. 
The  segments inserted are the
pillars
of~$\Gamma$.
Each source (real isolated black vertex) gives rise to a jump or a wave (resp., a wave without \white-vertices) and is decorated accordingly;
all other pillars consist of dotted edges (the \white-vertices
are to be inserted later, see~\ref{ss.white})
with \cross-vertices at the ends.
The proper transforms of the edges of~$\Sk$
are the inner dotted edges of~$\Gamma$.

\subsubsection{ }\label{ss.white}
The blow-up produces
a certain
\rm{dotted} subgraph $\Sk'\subset\base$.
Choose an admissible orientation of~$\Sk$, see
Proposition~\ref{prop.adm.or1}, regard it as an orientation of the
inner edges of~$\Sk'$, and insert a \white-vertex at the center of
each  dotted (bold) real segment connecting a pair of outgoing inner edges
and/or \cross-vertices (resp., \black-vertices).

\subsubsection{ }\label{ss.solid}
At the boundary $\partial D$, connect two neighboring  pillars  with a pair of real solid edges through a monochrome vertex if the pillars are of the same kind (dotted or bold),   and with a real solid edge otherwise. 
 
Let~$R$ be a region of the cut $\bbase_{\Sk}$, and $R'$ be the proper transform of $ R $, which is a region  of the cut $D_{\Sk'}$.  
 
The  vertices
of~$R$ define germs of inner solid and bold edges at the boundary $\partial R'$ and in the inner \black-vertices of $ R' $. 
The orientations of the germs and the valences of the \black- and the \white-vertices are determined by the definition of dessin. 
The arrangement of monochrome vertices indicated above shows that the number of germs of outgoing  (incoming) bold edges equals $b_1+3b$ (resp., $v+z $), the number of germs of outgoing  (incoming) solid  edges equals $v+z-b_r $ (resp., $b_1+2b_r+3b_i$) where $ b_r $ and $ b_i $ are the numbers of real and inner black vertices in $ R $.  Thus the numbers of outgoing and incoming germs of bold (solid) edges  are equal by (\ref{Sk.5}) of Definition~\ref{def.a.skeleton}. In particular, 
 the number
 of germs of outgoing   solid  edges 
is greater than the number of  inner 
black vertices  
  due to (\ref{Sk.5}) of Definition~\ref{def.a.skeleton}.  
  
  Connect each inner \black-vertex with a (real solid) monochrome vertex by a solid edge without crossing the obtained edges (clearly, it is possible). Consider  an oriented cycle $ \sigma $ along  $\partial R'$ with moving to each inner \black-vertex along the solid edge and back to $\partial R'$. It induces a cyclic order on the set of germs of solid and bold edges in $ R' $. Take a  pair of neighboring  incoming and outgoing bold germs. If there are  solid germs between them then the orientations of that bold  and  solid germs alternate along $\partial R'$ due to  the definition of dessin. Thus the solid germs form pairs of neighboring  incoming and outgoing  germs; convert each pair to a solid edge. Then convert  the initial pair of bold germs to a bold edge, cut off along it the obtained  regions of ~$\Gamma$ from  $ R' $, and reconstruct the cycle $ \sigma $ correspondingly. Repeat the process using the new cycle. 

This completes the construction of a dessin extending~$\Sk'$.

\subsubsection{ }\label{ss.uniqueness}
For the uniqueness, first observe that
a decoration of~$\Sk'$ with \white-vertices
is unique up to
isotopy and \white-ins/\white-outs along \rm{dotted} edges.
Indeed, assuming
all \white-vertices real,
each such decoration is obtained from a
certain admissible orientation, see~\ref{ss.white},
which is unique up to a sequence
of elementary inversions, see
Proposition~\ref{prop.adm.or1}, and an elementary
inversion
results in a \white-in
followed by a \white-out at the other end of the edge reversed.
Thus, the distribution of the \white-vertices can be assumed
`standard'.

For each region $R'$ of the cut $D_{\Sk'}$, make any dessin~$\Gamma$
extending $\Sk'$ `standard' in $ R' $ using elementary moves of dessins (see \ref{ss.equivalence}) in the following way.  
\subsubsection{ }\label{ss.scrap}
The region $ R' $ is a \emph{scrap} in  the sense of \cite[5.7]{DIK}. 
Each maximal oriented path of $\Skdir\cap\partial R$ (with its end named by  \emph{sink}) gives rise to 
a \emph{break} of the scrap, each connected component  of $\Skud\cap\partial R$ gives rise to a zigzag and each black vertex  with two adjacent outgoing edges at $\partial R$ gives rise to a pair of neighboring  \white-vertices   with two  real bold edges through a monochrome vertex at $\partial R'$. 
For a pair of \white-(\black-)vertices connected by  two bold edges, a \white-(\black-)in elementary moves converts the pair  
to a zigzag (resp., to an inner \black-vertex) and transforms $ R' $ to a scrap without  bold breaks and bold monochrome vertices. 
Let $ R' $ be such a scrap until the end of the proof.  

\subsubsection{ }\label{ss.source}
Let a white vertex $ \zeta $ be a  
neighbor of a source  at $ \partial R$. Then $ \beta^{-1}(\zeta) $ is a zigzag at $ \partial R'$ (see Figure \ref{2sources}). Performing, if necessary, a  bold monochrome modification and a solid bridge in the solid edge, for definiteness, not adjacent to the zigzag (Figure \ref{2sources} (a)), 
one obtains a region of $ \Gamma $ with two real solid edges, the first one adjacent  to a solid monochrome vertex, the second one to  a \cross-vertex. Hence, if the region is not triangular the edges are different and there is a solid artificial cut (the double line in the figure) that reduces  $ R' $ to a smaller scrap.
\begin{figure}[tb]
	\begin{center}
		\includegraphics{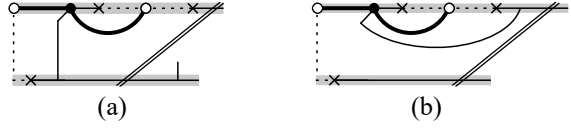}\\
	\end{center}
	\caption{Cut off a source and a \white-vertex}\label{2sources}
\end{figure}
\subsubsection{ }\label{ss.sink}
Let a white vertex $ \zeta $ be a 
neighbor of a sink  at $ \partial R$. Then $ \beta^{-1}(\zeta) $ is a zigzag at $ \partial R'$.
Performing, if necessary, a solid and a bold monochrome modifications, one obtains a region of $ \Gamma $ with two real solid edges, one of them adjacent  to a \black-vertex, another to  a \cross-vertex (see Figure \ref{sink_black} (a)). Hence, if the region is not triangular the edges are different and there is a solid artificial cut (the double line in the figure) that reduces  $ R' $ to a smaller scrap.  

\begin{figure}[tb]
	\begin{center}
		\includegraphics{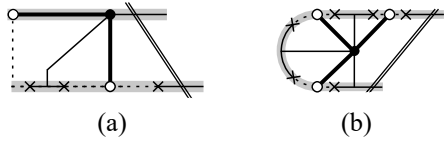}\\
	\end{center}
	\caption{Cut off (a) a sink and a \white-vertex, (b) a \black-vertex group}\label{sink_black}
\end{figure}

\subsubsection{ }\label{ss.black}
 Let   a  \white-vertex $ \omega $ be a  
neighbor of a  solid monochrome vertex $ \mu $ at $ \partial R'$, and they be connected with different inner \black-vertices $ \alpha_1 $ and $ \alpha_2 $ correspondingly. Then  in the region of $ \Gamma $ that contains  these four vertices, one can make a solid monochrome modification that connects $ \mu $ with $ \alpha_1 $ or a bold monochrome modification that connects $ \omega$ with $ \alpha_2 $. 
\subsubsection{ }\label{ss}
In view of (\ref{Sk.5}) of Definition \ref{def.a.skeleton}, applying \ref{ss.source} and \ref{ss.sink}, it is possible to reduce $ \Gamma\cap R' $ to a scrap  with a single dotted break and three triangular regions of $ \Gamma $ (a 'half' of type I cubic, see  Figure \ref{cubics} below) or to a dessin without real \black-vertices. In the latter case  $ b=3z $
due to (\ref{Sk.5}) of Definition \ref{def.a.skeleton}, and \white-vertices and solid monochrome ones  alternate along $ \partial R'$. Take a partition of the vertices into $ b $ groups of $ 6 $ consecutive vertices starting with a \white-vertex.  Due to \ref{ss.black} for any such partition one can have bold and solid edges of $ \Gamma $ that connect each \black-vertex with the vertices of a group. Note that, as in \ref{ss.source} and \ref{ss.sink}, for $ b\geq 1 $   there is a solid artificial cut (the double line in Figure \ref{sink_black} (b)) that reduces the scrap $ R' $ to a smaller scrap deleting such a group together with the corresponding \black-vertex.\qed

In contrast to the case of type I skeleton (see  \cite[5.4.4]{DIZ}), the uniqueness part of Proposition \ref{Sk.extension} is not true for type II skeletons of maximally inflected trigonal curves 
over a base curve
of positive genus.

\subsection{Proof
	of Proposition~\ref{prop.Sk}}\label{proof.prop1}
The `only if' part is obvious: an elementary move of a dessin
either leaves its skeleton intact or results in its
elementary modification; in the latter case, 
some admissible
orientation of  the skeleton  edges  is respected, see~\ref{ss.white}.

For the `if' part, consider the skeleton~$\Sk$
at the moment of
a transformation (a), (c) -- (e).
It can be regarded as the skeleton of an
admissible trichotomic graph 
and,
repeating the proof of
Proposition~\ref{Sk.extension}, one can see that $\Sk$ does
indeed
extend to a certain  
admissible trichotomic graph. The extension remains a valid
dessin~$\Gamma$ before the transformation as well. Hence, due to the
uniqueness given by Proposition~\ref{Sk.extension}, one can assume
that the original dessin is~$\Gamma$, and then the elementary move of
the skeleton is merely an elementary move of~$\Gamma$.

Destroying a bridge of a skeleton is the same as destroying
a bridge of the corresponding dessin, and the inverse operation
of creating a bridge extends to a dessin equivalent
to the original one due to the uniqueness given
by Proposition~\ref{Sk.extension}.
\qed

\section{A constructive description	of maximally inflected trigonal curves}\label{S.rational}
In this section,  a constructive description
of the real parts of maximally inflected 
trigonal curves is given.

\subsection{Blocks}\label{s.blocks}
\begin{opr}\label{def.block}
A \emph{type~$ \I $ cubic block} is an unramified  dessin  of degree~$ 3 $ of type~$\I$ (see Figure \ref{cubics} I). A \emph{type~$ \II $ cubic block} is an unramified  dessin of degree~$ 3 $ of type~II with an inner \black-vertex (see Figure \ref{cubics} II). Several cubic blocks glued artificially along solid edges is a (\emph{general}) \emph{block} (see corresponding possible artificial cuts in Figures  \ref{2sources} and \ref{sink_black}).
A type of a block is the type of the corresponding block curve.
\end{opr}
Due to \cite[5.6.7]{DIK}, type~I cubic block is unique, and the skeleton of an unramified  dessin of degree $ 3 $ of type~II has   a single real or inner black vertex, so    type~II cubic block is also unique. 

\begin{figure}[tb]
	\begin{center}
		\includegraphics{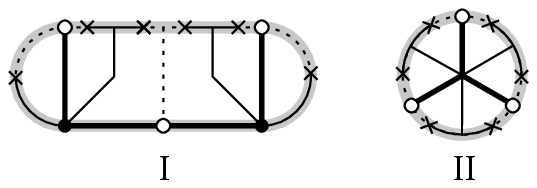}\\
	\end{center}
	\caption{}\label{cubics}
\end{figure}

Blocks are easily enumerated by the following statement.

\begin{prop} \label{block.existence}
		Let~$d\ge1$ be an integer, and let $O,J\subset S^1=\partial\base$ be
	two disjoint sets of size~$d$ each. Then, there is a unique, up to
	restricted equivalence, type I
	block $\Gamma\subset\base$ of degree~$3d$
	with an oval about each
	point of~$O$, a jump at each point of~$J$, and a zigzag between
	any two points of $O\cup J$ \emph{(}and no other pillars\emph{)}.
	
	A block 
	of degree~$3d$ of any type with $ c $ jumps, $ c $ ovals, $ b $ inner \black-vertices and $ z $ zigzags corresponds to an abstract skeleton in a disk  with $ c $ directed disjoint
	chords, $ b $  inner \emph{(}black\emph{)} vertices and $ z $  real isolated white vertices, that satisfy the following conditions: 
	\begin{enumerate}
		\item \label{11.1} 
	$b+c=d$, $z+c=3d$; 
	\item\label{11.2} 
	for each	component $ R_i $ of the closed cut of the disk along the chords,  $z_i=c_i+3b_i$ where $c_i$, $b_i$ and	$z_i$ are the numbers of the  chords, the black inner vertices
	and the  real isolated vertices of the component.
\end{enumerate}
\end{prop}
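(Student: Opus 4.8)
The plan is to prove both assertions through the skeleton dictionary of Definition~\ref{def.skeleton}, transferring the statements about blocks to statements about abstract skeletons by Theorem~\ref{cor.Sk} and Proposition~\ref{prop.Sk}, and then reading off the numerical relations from the essential vertices of the dessin. First I would describe the skeleton $\Sk_\Gamma$ of a block~$\Gamma$. A block is glued from cubic blocks along solid cuts; since each cubic block has no wave and no inner dotted (type~$3$) edge, and solid gluing introduces neither, a block has none either. Consequently every jump is a bold $1$-pillar joined to a dotted pillar by a single outgoing type~$2$ edge (cf.\ Theorem~\ref{th.summary}(2)), so in $\Sk_\Gamma$ it becomes a monovalent black source joined by one chord to a white sink (an oval), while every zigzag becomes an isolated real white vertex and every inner \black-vertex an isolated inner black vertex. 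Thus $\Skdir$ consists of the $c$ disjoint chords together with the $b$ isolated inner vertices, and $\Skud$ is exactly the $z$ isolated white vertices, which is the asserted form.

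For the first (type~$\I$) assertion I would build this skeleton directly: its black vertices are the jumps at~$J$ and its chord-sinks are the ovals at~$O$. By conditions~(\ref{Sk.10}) and~(\ref{Sk.9}), each black vertex is a monovalent source and each oval a sink of valency~$1$, so the chords form a non-crossing perfect matching $J\to O$. Such a matching exists since $|O|=|J|$: there is always a pair of cyclically adjacent points of different colour, which I match and delete, recursing on the rest. The resulting graph satisfies the type~$\I$ axioms — in particular the parity rule~(\ref{Sk.11}), because the prescribed zigzags interleave $O\cup J$ — and Proposition~\ref{Sk.extension} extends it to a block. For uniqueness up to restricted equivalence I would use Proposition~\ref{prop.Sk}, which reduces the claim to showing that any two type~$\I$ skeletons with these fixed vertices are connected by the moves of~\ref{equivalence}; concretely, two non-crossing matchings of the same endpoints are linked by elementary modifications together with create/destroy-undirected-edge moves that reroute one chord at a time. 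This is the type~$\I$ situation already settled in~\cite{DIZ}, whose argument applies here verbatim.

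For the numerical conditions I would count essential vertices of the generic dessin. The \black-vertices are the preimages of $j=0$, i.e. the $2d$ zeros of~$b$; by genericity each is incident to one solid and one bold edge, so a bold pillar has no interior \black-vertex and is a $1$-pillar. As a block has no waves, every bold pillar is a jump and contributes its two endpoints, giving $2c$ real \black-vertices, while the $b$ inner \black-vertices of~$\Gamma$ lift to $2b$ points of~$\mathbb{P}^1$; hence $2c+2b=2d$, that is $b+c=d$. The \cross-vertices are the $6d$ zeros of $\Delta=4b^3+27w^2$; all are real since the curve is maximally inflected, and, being simple, each is incident to one solid and one dotted edge, so every dotted pillar is bounded by exactly two \cross-vertices and there are $6d/2=3d$ of them, i.e. $z+c=3d$. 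This is condition~(\ref{11.1}). For condition~(\ref{11.2}) I would specialise property~(\ref{Sk.5}) of Definition~\ref{def.a.skeleton} to the block skeleton: here $v=0$, as every source is monovalent and every inner vertex is isolated; the isolated inner vertices and the components of $\Skud\cap\partial R_i$ are precisely the $b_i$ inner vertices and $z_i$ zigzags of the region; and each chord bounding $R_i$ contributes its monovalent source, so $b_1=c_i$. Then $b_1+3b=v+z$ reads $z_i=c_i+3b_i$.

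The main obstacle is the uniqueness in the first assertion: since the non-crossing matching of the fixed jump and oval endpoints is in general not unique, one must show that all such matchings fall into a single restricted-equivalence class, which is the genuinely combinatorial step and is carried out by the chord-rerouting moves as in~\cite{DIZ}. Everything else is bookkeeping, provided the genericity facts are stated with care — namely that in a generic maximally inflected dessin a \black-vertex (respectively \cross-vertex) is incident to exactly one solid and one bold (respectively dotted) edge, so that bold and dotted pillars are bounded by two such vertices — since on these rest both counts in~(\ref{11.1}) and the identification $b_1=c_i$ in~(\ref{11.2}).
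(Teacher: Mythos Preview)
Your proof is correct and follows essentially the same route as the paper: identify the skeleton of a block as a collection of disjoint directed chords, isolated inner black vertices and isolated real white vertices, invoke~\cite{DIZ} for the type~$\I$ existence and uniqueness, and read off conditions~(\ref{11.1}) and~(\ref{11.2}) from the dessin and abstract-skeleton definitions respectively. The only noteworthy difference is how the skeleton form is obtained: the paper extracts it from the reduction procedure in the proof of Proposition~\ref{Sk.extension} (and thereby gets the converse ``if and only if'' characterisation as well), whereas you argue directly from Definition~\ref{def.block}, observing that each cubic block has neither waves nor inner type~$3$ edges and that artificial solid gluing cannot create them; this is a legitimate shortcut for the direction actually stated in the proposition.
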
 

\begin{proof} The case of type I block is proved in \cite[6.3.2]{DIZ}.

Let us prove the second part of the Proposition. Clearly, a block is an unramified dessin. From the proof of Proposition~\ref{Sk.extension} it follows that an unramified dessin is a block if and only if, for its skeleton $ \Sk $, the directed part $ \Skdir $ is a collection of inner black vertices and disjoint chords  in the disk~$\bbase$ connecting sources and sinks,  the undirected one $ \Skud $ is a collection of 
real white isolated vertices (zigzags). The conditions (\ref{11.1}) and (\ref{11.2}) follow from the dessin definition and  the abstract skeleton definition correspondingly.
\end{proof} 
\begin{za}
	\label{cor.block}
Whereas the first part of Proposition \ref{block.existence} enumerates the equivalence classes of type I blocks directly, the second part gives, due to Theorem \ref{cor.Sk}, a bijection between the equivalence classes of  blocks and the equivalence classes of abstract skeletons used in the Proposition. For example, there are exactly two type I blocks on $ \Sigma_2 $ with the skeletons shown in Figure \ref{blocks_on_s_2}.
	
\end{za}
\begin{figure}[tb]
	\begin{center}
		\includegraphics{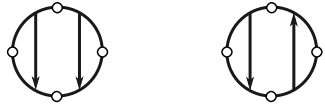}\\
	\end{center}
	\caption{Type I blocks on $\Sigma_2$}\label{blocks_on_s_2}
\end{figure}
\subsection{Gluing}
	\label{gluing}
	Given an unramified dessin $ \Gamma $, make all \white-out to obtain all \white-vertices real. This corresponds to choosing an admissible orientation of the skeleton of $ \Gamma $.
	Destroying all dotted bridges in $ \Gamma $ gives its skeleton consisting of isolated vertices and \emph{star-like} components. The center of a star-like component of $ \Skdir $ ($ \Skud $) is a  black (resp., white) real vertex, the end of each ray is a white vertex corresponding in $ \Gamma $ to an oval  free of \white-vertices. 
	Such a dessin is called \emph{star-like}. 
	
	Given  	star-like dessins $\Gamma_1 $, $ \Gamma_2 $, 
	glue them genuinely along a diffeomorphism  $ \varphi: l_1\rightarrow l_2 $ where  $ l_i\subset \Gamma_i$, $i=1,2$ are bold or dotted segments,  and destroy a possible bridge to obtain a star-like  dessin  $ \Gamma $. If $ l_1, l_2 $ are parts of zigzags then the gluing is a junction. Let $ l_i $ be a part of a bold or dotted edge of a pillar $ p_i \subset\Gamma_i$. Let the orientation of $ l_i $ be thought of  vertically down, and $ a_i $ and $ b_i $ be the numbers of \white-vertices (dotted monochrome vertices) of the bold (resp., dotted) pillar $ p_i $ above and below $ l_i$. Then in $ \Gamma $, after the gluing, $ p_1 $ and $ p_2 $ are converted to a pair of bold (dotted) pillars   
	with the numbers of \white-vertices (resp., dotted monochrome vertices) equal $ a_1+a_2 $ and $ b_1+b_2 $. It is clear how the corresponding  gluing of skeletons of $\Gamma_1 $, $ \Gamma_2 $ is described.
\subsection{Real parts of maximally	inflected curves}
\label{CR}
Proposition~\ref{block.existence} and Section \ref{gluing} 
provide a complete description of the real part of a maximally
inflected 
trigonal curve, 
\emph{i.e.}, a description of the topology of  $C_{\R}\cup s_0$ where $ s_0\subset\Sigma_d $ is the zero section.
Realizable is a real part obtained as follows: \begin{enumerate}
	\item For a curve of type I, start with a
disjoint union of a number of type I  blocks, see
Proposition~\ref{block.existence}, and perform a sequence of
junctions converting the disjoint union of disks to a single
disk.
\item For a curve of type II, start with a
disjoint union of a number of   blocks, see
Proposition~\ref{block.existence}, and glue the blocks
genuinely along parts of bold and/or dotted edges  converting the disjoint union of disks to a single
disk.
\end{enumerate}

\begin{za}\label{remark4}
	As it is mentioned in \cite[6.3.3]{DIZ}, 
	a decomposition	of an unramified	dessin 	into  
	blocks is far	from unique even in the case of type~I curves.
\end{za}

\section{Rigid isotopies and week equivalence}\label{s.rigid}
A \emph{rigid isotopy}
of nonsingular real trigonal curves is an isotopy in the class of
nonsingular real algebraic
curves
in a fixed real ruled surface.
During the isotopy the curves are not necessarily almost-generic. 

Intuitively, the new notion differs from the deformation equivalence
by an extra pair of mutually inverse operations:
straightening/creating a zigzag, the former consisting in
bringing the two vertical tangents
bounding
a zigzag
together to a single vertical flex and pulling them apart
to the imaginary domain. On the level of dessins,
these operations are shown in Figure~\ref{fig.zigzag}.

\begin{opr}
	Two dessins are called \emph{weakly equivalent} if they
	are related by a sequence of isotopies,
	elementary moves (see~\ref{ss.equivalence}),
	and the operations
	of \emph{straightening/\penalty0creating a zigzag}
	consisting in replacing one of the fragments shown in
	Figure~\ref{fig.zigzag} with the other one.
\end{opr}

\begin{figure}[tb]
	\begin{center}
		\includegraphics{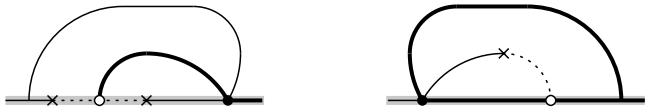}\\
	\end{center}
	\caption{Straightening/creating a zigzag}\label{fig.zigzag}
\end{figure}
The following statement is easily deduced from~\cite{DIK},
\emph{cf}.~Proposition~\ref{equiv.curves}.

\begin{prop} \label{equiv.zigzag}
	Two generic real trigonal curves are rigidly isotopic
	if and only if their dessins are weakly equivalent.
	\qed
\end{prop}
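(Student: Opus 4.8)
The plan is to deduce the statement from Proposition~\ref{equiv.curves} by isolating exactly the local events that a rigid isotopy permits but an equivariant fiberwise deformation forbids. A rigid isotopy $\{C_t\}$ keeps the curve nonsingular but may violate the almost generic condition at finitely many moments~$t$; on the complementary subintervals $C_t$ stays almost generic, so the isotopy restricted there is a fiberwise deformation. First I would perturb $\{C_t\}$, keeping its endpoints fixed, so that $C_t$ is generic for all but finitely many~$t$ and so that at each exceptional moment a single degeneration takes place. By Proposition~\ref{equiv.curves}, on each generic subinterval the dessin $\Gamma_{C_t}$ changes only by the elementary moves of~\ref{ss.equivalence}; hence it remains to read off how $\Gamma_{C_t}$ jumps across a single exceptional moment.

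The exceptional moments are precisely those at which the discriminant $\Delta=4b^3+27w^2$ acquires a multiple zero while $C_t$ stays nonsingular; generically this is a double zero, i.e.\ the merging of two simple vertical tangents. Here the nonsingularity of $C_t$ is essential: it forbids the collision of a vertical tangent with another branch of the curve, and in particular any disappearance of an oval would force the curve through a real solitary point and is thus excluded. The only codimension-one event left is a pair of real vertical tangents coming together at a vertical flex and then either (i) splitting again into two real tangents, or (ii) leaving $\mathbb{P}^1_{\R}$ as a complex-conjugate pair, together with the time reversal of~(ii). In case~(i) the two \cross-vertices stay real and the transition is realized by the elementary moves already generating the equivalence of dessins, so it is absorbed into Proposition~\ref{equiv.curves}. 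Case~(ii) is exactly the analytic description of straightening a zigzag: a dotted pillar carrying an odd number of \white-vertices between its two bounding \cross-vertices is the dessin of a zigzag, and its disappearance into the imaginary domain is the replacement of one fragment of Figure~\ref{fig.zigzag} by the other; the reverse transition creates a zigzag.

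Granting this wall analysis, both implications follow. A rigid isotopy decomposes into fiberwise deformations and zigzag straightenings/creations, which by Proposition~\ref{equiv.curves} and the previous paragraph induce on dessins exactly the elementary moves of~\ref{ss.equivalence} together with the replacements of Figure~\ref{fig.zigzag}, that is, a weak equivalence; conversely, each elementary move and each straightening/creating of a zigzag is realized by an explicit rigid isotopy, the latter by the local model of a vertical flex unfolding to the imaginary domain. I expect the main obstacle to be the exhaustiveness of the wall classification, namely confirming that, for a nonsingular real trigonal curve moving along the real locus, the collision of two discriminant zeros is the only generic codimension-one degeneration beyond those already accounted for by the elementary moves, and that case~(i) never produces anything outside the equivalence. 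This is the point at which the analysis of~\cite{DIK} is invoked, as indicated before the statement.
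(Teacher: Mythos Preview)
The paper gives no proof of this proposition beyond the remark preceding it that the statement ``is easily deduced from~\cite{DIK}, \emph{cf}.~Proposition~\ref{equiv.curves}'' and a terminal \qed. Your sketch is precisely such a deduction---splitting a rigid isotopy into almost generic subintervals governed by Proposition~\ref{equiv.curves} and finitely many wall-crossings through a real vertical flex, which you correctly identify with the straightening/creating of a zigzag---so it is entirely in line with the approach the paper indicates.
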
 

As  mentioned in \cite{DIZ}, the following theorem 
can be deduced, \emph{e.g.}, from
Propositions~5.5.3 and~5.6.4 in~\cite{DIK},
see also~\cite{Z}.
\begin{theorem} \label{max.inflected}
	Any non-hyperbolic nonsingular
	real trigonal curve
	on a Hirzebruch surface is
	rigidly isotopic to a maximally inflected one. \qed
\end{theorem}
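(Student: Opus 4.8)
The plan is to transfer the statement to the combinatorics of dessins by means of Proposition~\ref{equiv.zigzag} and then to show that every non-hyperbolic dessin is weakly equivalent to an unramified one, the latter being, by definition, the dessin of a maximally inflected curve.

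First I would reduce to the generic case. A nonsingular real trigonal curve can be perturbed, first to an almost generic and then to a generic one; as the perturbation is small, it keeps the curve nonsingular and non-hyperbolic (a non-hyperbolic almost generic curve has a simple real zero of $\Delta=4b^3+27w^2$, so positive values of $\Delta$ persist on an interval) and it is therefore a rigid isotopy. Hence it suffices to rigidly isotope a generic non-hyperbolic curve $C$ to a maximally inflected one, and by Proposition~\ref{equiv.zigzag} this amounts to showing that the dessin $\Gamma=\Gamma_C$ is weakly equivalent to an unramified dessin.

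The combinatorial heart is thus the claim that every non-hyperbolic dessin is weakly equivalent to one all of whose \cross-vertices are real. Recall that the \cross-vertices are the zeros of the discriminant, \emph{i.e.} the critical points of $\pi_C$, and that an inner \cross-vertex is exactly a complex-conjugate pair of such critical points --- precisely the configuration that maximal inflection forbids. I would remove these inner \cross-vertices one at a time: using monochrome modifications (and, if necessary, bridges) to route an inner \cross-vertex along a solid path toward the boundary $\partial\base$, and then applying the creating-a-zigzag operation of Figure~\ref{fig.zigzag}, which replaces an inner \cross-vertex that has been brought adjacent to the boundary by a pair of real \cross-vertices bounding a newly created real zigzag. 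Non-hyperbolicity is what makes this possible: it guarantees that the real part of the dessin is not entirely dotted, equivalently that there is at least one real \cross-vertex and a long component to anchor the reduction, so that a path to the boundary always exists. In the hyperbolic case all \cross-vertices are necessarily inner and no such reduction is available, which is why that case is excluded.

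The main obstacle is precisely this last reduction: one must check that every local configuration of an inner \cross-vertex can indeed be transported to the boundary and split, using only isotopies, elementary moves, and zigzag operations, and without ever creating an oriented monochrome cycle or otherwise leaving the class of dessins, and that the procedure terminates. This bookkeeping is exactly the content of Propositions~5.5.3 and~5.6.4 of~\cite{DIK} (see also~\cite{Z}), which I would invoke to finish the argument. Once $\Gamma$ has been shown weakly equivalent to an unramified dessin $\Gamma'$, Proposition~\ref{equiv.zigzag} produces a generic curve realizing $\Gamma'$ that is rigidly isotopic to $C$ and, being unramified, is maximally inflected.
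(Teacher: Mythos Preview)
Your proposal is correct and matches the paper's treatment: the paper gives no proof of its own but simply records that the theorem follows from Propositions~5.5.3 and~5.6.4 of~\cite{DIK} (see also~\cite{Z}), which is exactly the reference you invoke after your sketch. Your additional outline (reduce to the generic case, pass to dessins via Proposition~\ref{equiv.zigzag}, and eliminate inner \cross-vertices by creating zigzags) is a faithful unpacking of what those cited results accomplish.
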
 
Due to \cite[6.6.2]{DIK}, if trigonal curves are $M$-curves then their weak equivalence classes coincide with the classes of deformation equivalence. Is it true for maximally inflected curves? This   question was put in Remark \cite[A.3.2]{DIZ} in the case of curves of type I. Proposition \ref{rig.vs.def} below answers the question and, combined with Subsection \ref{CR} and Theorem \ref{max.inflected}, allows to describe the real parts of nonsingular real trigonal curves up to weak equivalence. 
\begin{lem}
	\label{jzo}
A weak equivalence can reverse the order of the triple $ (J,Z,O) $ where $ J $, $ Z $ and $ O $ are neighboring jump, zigzag  and oval \emph{(}cf. \emph{\cite[6.6.2]{DIK})}.	     
\end{lem}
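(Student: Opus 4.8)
The plan is to prove the lemma by localizing the weak equivalence to the fragment of the dessin carried by the three consecutive pillars, and to exhibit the reversal as an explicit sequence of elementary moves together with one straighten/create-zigzag operation. Working directly with dessins (rather than skeletons) is essential here, since the straighten/create-zigzag operation of Figure~\ref{fig.zigzag} is not a skeleton move, and weak equivalence is strictly coarser than the restricted equivalence captured by Theorem~\ref{cor.Sk}.

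First I would fix the local model along $\partial\base$. The triple $(J,Z,O)$ reads as a bold pillar with an odd number of \white-vertices (the jump), followed by a dotted pillar with an odd number of \white-vertices (the zigzag), followed by a dotted pillar with an even number of \white-vertices (the oval). Since $J$ is bold and $Z$ is dotted, they are joined by a single real solid edge; since $Z$ and $O$ are both dotted, they are joined by a pair of real solid edges through a solid monochrome vertex (see~\ref{ss.solid}). By Theorem~\ref{th.summary} I would also record the inner dotted edges issuing from these pillars, so that the orientations relevant to admissibility are under control throughout.

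Next comes the heart of the argument. The reflection will be realized by one application of the straighten/create-zigzag operation, bracketed by elementary moves. Concretely, I would first use elementary moves (monochrome modifications and bridge creation/destruction) to bring the bold structure of the jump and the dotted structure of the oval into a position adjacent to $Z$ where a straightening becomes available, then straighten the existing zigzag and create a new one on the opposite side, so that after a final round of elementary moves the pillars are read off in the reversed order $(O,Z,J)$. The essential point is that it is the \emph{odd} parity of the zigzag $Z$ that makes this reflection possible: it is exactly this parity which obstructs the reversal by deformation equivalence alone, and which the create/straighten operation is able to transfer from one side of the triple to the other. This is the mechanism behind the analogous phenomenon in~\cite[6.6.2]{DIK}.

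The hard part will be validity bookkeeping rather than any single move. Each monochrome modification and each bridge move is admissible only when it does not create an oriented monochrome cycle, and each create-zigzag step must land on a genuine dessin with the prescribed pillar decorations; so the real content is a careful tracking of the orientations of the inner dotted edges (of types~$2$ and~$3$, per Theorem~\ref{th.summary}) around $J$, $Z$, $O$ at every intermediate stage, together with a final check that the reconstructed dessin carries the pillars in the reversed order $(O,Z,J)$ and is weakly equivalent --- indeed connected by the above explicit sequence --- to the original.
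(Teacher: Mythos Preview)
Your overall strategy---localize to the three pillars, straighten the zigzag, and surround that operation by elementary moves---is the paper's strategy as well. However, what you have written is a plan rather than a proof: you never exhibit the sequence, and you anticipate a substantial ``validity bookkeeping'' phase that is in fact not needed. The paper's argument is one sentence plus a figure: straighten the zigzag, then perform one \white-in and one \black-in; the resulting fragment (Figure~\ref{lantern}) is visibly symmetric under the reflection exchanging the $J$-side and the $O$-side, so the inverse moves (\black-out, \white-out, create a zigzag) on the other side produce the order $(O,Z,J)$.

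The concrete gap in your outline is the choice of elementary moves. You propose monochrome modifications and bridge creation/destruction as the bracketing moves, but those are not the ones that do the work here. The decisive moves are \white-in and \black-in (Figure~\ref{fig.moves}(c)--(f)): the jump is a bold pillar bounded by a real \black-vertex, the adjacent dotted pillars carry real \white-vertices, and after straightening $Z$ one pulls one \white{} and one \black{} into the interior to reach the symmetric intermediate state. No preparatory repositioning is needed before the straightening, and no tracking of type-$2$/type-$3$ edge orientations is required; once you draw the intermediate fragment, the lemma is immediate.
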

\begin{proof}
See Figure \ref{lantern} with a fragment of  an intermediate dessin appeared during a weak eqiuvalence  after straightening the zigzag,  a \white- and a \black-in.	
\end{proof}
\begin{figure}[tb]
	\begin{center}
		\includegraphics{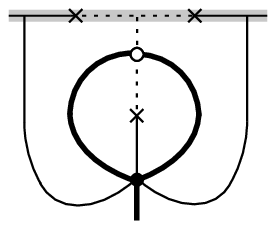}\\
	\end{center}
	\caption{Fragment of  an intermediate dessin}\label{lantern}
\end{figure}
\begin{prop} \label{rig.vs.def}
For any~$d\ge1$, there is a unique, up to
weak equivalence, type I
block $\Gamma\subset\base$ of degree~$3d$.
\end{prop}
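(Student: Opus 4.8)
The plan is to reduce the statement to a purely combinatorial fact about cyclic words and then feed it the transposition move of Lemma~\ref{jzo}. By the first part of Proposition~\ref{block.existence}, a type~$\I$ block of degree~$3d$ is determined, up to restricted (hence, a fortiori, weak) equivalence, by the cyclic arrangement of its $d$ jumps and $d$ ovals along $\partial\base$: for a type~$\I$ block one has $b=0$, so $b+c=d$ forces $c=d$ and $z+c=3d$ forces $z=2d$, i.e.\ there are exactly $2d$ zigzags, one in each arc between two consecutive points of $O\cup J$. Thus every type~$\I$ block of degree~$3d$ corresponds to a cyclic word of length~$2d$ in the letters $J$ (jump) and $O$ (oval) with $d$ of each, the zigzags being forced between consecutive pillars; and every such word is realized by a block, again by Proposition~\ref{block.existence}. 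It therefore suffices to prove that weak equivalence identifies all such words.

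First I would observe that, because consecutive pillars of a block are always separated by a single zigzag, any neighboring jump and oval sit in precisely the local configuration $(J,Z,O)$ (equivalently $(O,Z,J)$) to which Lemma~\ref{jzo} applies. The lemma then supplies a weak equivalence reversing this triple, that is, an \emph{adjacent transposition} of a neighboring $J$ and $O$ across the intervening zigzag (a transposition of two equal letters being vacuous). Next I would invoke the elementary fact that adjacent transpositions of distinct letters act transitively on words with a fixed letter content: cutting the circle at any fixed pillar turns the cyclic word into a linear word of $d$ jumps and $d$ ovals, which can be bubble-sorted into the canonical form $J^dO^d$ using only transpositions of \emph{consecutive} positions—each one a single application of Lemma~\ref{jzo}, and none of them wrapping around the cut. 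Hence every cyclic word with $d$ jumps and $d$ ovals is weakly equivalent to $J^dO^d$, and therefore to every other such word; this gives the desired uniqueness.

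The one point that genuinely needs care is the applicability of Lemma~\ref{jzo} at each adjacency we exploit, namely that two pillars which are neighbors in the cyclic order of a block are separated by exactly one zigzag and nothing else, so that the local picture really is the triple $(J,Z,O)$. This is precisely the structure guaranteed by Proposition~\ref{block.existence}, so the argument closes; note that the intermediate dessins arising in each transposition (cf.\ Figure~\ref{lantern}, which uses a straightened zigzag together with a \white-in and a \black-in) need not themselves be blocks, and this is harmless since weak equivalence is permitted to leave the class of blocks. Since the heavy lifting is done by Lemma~\ref{jzo}, the residual work is the bookkeeping of the sorting and this applicability check, and I expect the latter to be the only subtle step.
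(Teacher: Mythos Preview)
Your proof is correct and follows essentially the same route as the paper: reduce via Proposition~\ref{block.existence} to cyclic words in $J$ and $O$ with $d$ of each (zigzags forced between consecutive letters), use Lemma~\ref{jzo} as an adjacent transposition of a neighboring jump and oval, and sort to a canonical form. The only cosmetic difference is the choice of target word---the paper sorts to the alternating pattern $OJOJ\cdots OJ$ while you sort to $J^dO^d$---but both are reached by the same adjacent transpositions, so the arguments are equivalent.
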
 
\begin{proof}
In the notation of Proposition \ref{block.existence} let there be a sequence of successive points $ o_0,o_1,j_1,o_2,j_2,\ldots,o_n,j_n,j_{n+1} $ at $\partial D $ where $ o_i\in O $, $ j_m\in J $. By Lemma \ref{jzo}, one can make $ n $ transpositions of neighboring points to get the sequence  $ o_0,j_1,o_1,j_2,o_2,\ldots,j_n,o_n,j_{n+1} $. Thus, by induction it is possible to obtain the points of $ O $ and $ J $ being alternate along $\partial D $.
\end{proof}

The author is grateful to the referee for indicating some inaccuracies and ambiguities in  the first version of the paper.

\end{document}